\def\R{\mathbb R}
\def\N{\mathbb N}
\def\Z{\mathbb Z}
\def\C{\mathbb C}
\def\H{\mathcal H}
\def\x{\mathbf x}
\def\vol{\mathrm{vol}}
\def\re{\mathrm{Re}}
\def\a{\,d\mathcal{H}^n}
\newtheorem*{conj}{Conjecture}
\newtheorem{quest}{Question}[section]
\newtheorem{thm}{Theorem}[section]
\newtheorem{lemm}[thm]{Lemma}
\newtheorem{cor}[thm]{Corollary}
\newtheorem{prop}[thm]{Proposition}
\theoremstyle{remark}
\newtheorem{rmk}[thm]{Remark}
\theoremstyle{definition}
\title{Recent Progress on Singularities of Lagrangian Mean Curvature Flow}
\author{{Andr\'e Neves}}
\email{aneves@imperial.ac.uk}
\address{Department of Mathematics, Imperial College London, London SW7 2AZ}
\begin{document}

\maketitle 
\begin{center}
{\em Dedicated to Professor Richard Schoen on his sixtieth birthday.}
\end{center}
\markboth{Recent Progress on Singularities of LMCF} { Andr\'e Neves} \maketitle
\begin{abstract}
We survey some of the state of the art regarding singularities in Lagrangian mean curvature flow. Some open problems are suggested at the end.
\end{abstract}

\tableofcontents

\section{Introduction} Since Yau's solution to the Calabi Conjecture, Calabi-Yau manifolds and minimal Lagrangians (called special Lagrangians) have acquired  a central role in Geometry and Mirror Symmetry over the last 30 years. Unfortunately, the most basic question one can ask about special Lagrangians, whether they exist in a given homology or Hamiltonian isotopy class, is still largely open.  Special Lagrangians are  area-minimizing  and so one could  approach the existence problem by trying to minimize area among all Lagrangians in a given class. Schoen--Wolfson \cite{schoen} studied the minimization problem and showed that, when the real dimension is four, a Lagrangian minimizing area among all Lagrangians in a given class exists,  is smooth everywhere except finitely many points, but not necessarily a minimal surface. Later Wolfson \cite{wolfson}  found a  Lagrangian sphere  $\Sigma$ with nontrivial homology on a given K3 surface for which the Lagrangian which minimizes area among all Lagrangians homologous to $\Sigma$ is not a special Lagrangian and the surface which minimizes area  among all surfaces  homologous to $\Sigma$ is not Lagrangian. This shows the subtle nature of the problem and that  variational methods do not seem to be very effective. For this reason there has been increased interest in evolving a given Lagrangian submanifold   by the gradient flow for the area functional  (Lagrangian mean curvature flow) and hope  to obtain convergence  to a special Lagrangian. 

Initially there was a  source of optimism and, under the assumption that the tangent planes of the initial Lagrangian lie in some convex subset of the Grassmanian bundle, Smoczyk, Tsui, and Wang \cite{smoczyk, smoczyk-mt, mt1, mt2}  proved that the Lagrangian mean curvature flow exists for all time and converge to a special Lagrangian. Similar  results were also obtained in the symplectic or graphical setting by Chen, Li, Smoczyk, Tian, Tsui, and Wang \cite{chen, smoczyk, smoczyk2, smoczyk-mt, tsui, Wa1,mt1,mt2,mt3}. Unfortunately, the minimal surfaces which were produced by this method were already known to exist which means that, in order to find new special Lagrangians, one should drop the convexity assumptions on the image of the Gauss map. The drawback in doing so is that long-time existence can no longer be assured and, as a matter of fact, the author showed \cite{neves} that  finite-time singularities do occur for very ``well-behaved'' initial conditions.

\begin{thm} There is $L\subset \C^2$  Lagrangian, asymptotic to two planes at infinity, and with arbitrarily small  oscillation of the Lagrangian angle so that the solution to mean curvature flow develops finite time singularities.
\end{thm}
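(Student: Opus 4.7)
My plan is to construct $L$ as a smooth Lagrangian smoothing of a pair $P_1\cup P_2$ of transverse Lagrangian planes in $\C^2$, chosen so that no Lawlor neck connects them, and then to rule out long-time smooth existence by extracting a self-expander limit and invoking a classification theorem.

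Concretely, take $P_1,P_2\subset\C^2$ two transverse Lagrangian planes meeting at the origin with Lagrangian angles $\theta_1\ne\theta_2$. The Lawlor necks asymptotic to $P_1\cup P_2$ form a family of self-expanding special Lagrangians, and one exists if and only if an explicit algebraic relation $F(\theta_1-\theta_2,P_1,P_2)=0$ is satisfied. I choose planes so that $|\theta_1-\theta_2|$ is as small as desired but the Lawlor relation fails. Using a scaled Lawlor-type ansatz near $0$ and a cutoff far away, I build an embedded smooth Lagrangian $L$ asymptotic to $P_1\cup P_2$ with $\mathrm{osc}_L\theta\le |\theta_1-\theta_2|+o(1)$, hence arbitrarily small.

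Suppose, for contradiction, that the \lmcf\ $\{L_t\}_{t\ge 0}$ exists smoothly for all time. Along the flow $\partial_t\theta=\Delta\theta$, so the maximum principle preserves the oscillation bound on $\theta$. The expander rescaling $\tilde L_s:=e^{-s/2}L_{e^s}$ is natural because $L$ is asymptotic to a union of linear cones through the origin; $\tilde L_s$ satisfies a parabolic equation with drift whose stationary solutions are Lagrangian self-expanders. Using Huisken's monotonicity formula, local curvature bounds (which should follow from small $\mathrm{osc}\,\theta$ via a strong maximum principle argument), and Brakke compactness, along a subsequence $s_j\to\infty$ the rescaled flows converge to a limit $\tilde L_\infty$: a self-expanding special Lagrangian in $\C^2$ with asymptotes $P_1\cup P_2$ and Lagrangian angle constant on each component. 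By the classification of two-ended self-expanding special Lagrangians with planar asymptotes (Lawlor--Harvey, Anciaux), $\tilde L_\infty$ is either $P_1\cup P_2$ itself or a Lawlor neck. The first case is excluded by a Huisken density lower bound at the origin inherited from the topological neck of $L$; the second forces $F(\theta_1-\theta_2,P_1,P_2)=0$, contradicting the choice of planes. Hence $L_t$ cannot remain smooth for all $t\ge 0$.

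The most delicate step is this blowdown analysis: showing that the rescaled flows $\tilde L_s$ subconverge to a nontrivial self-expander rather than collapsing or losing the two-plane structure at infinity. This requires uniform local curvature and area bounds for $\tilde L_s$ at every scale, which I expect to derive from the small-oscillation hypothesis on $\theta$ combined with Huisken monotonicity for the original flow at scales $\sqrt{t}$. The density lower bound excluding the $P_1\cup P_2$ limit is also subtle, as one must quantify how the connected topology of $L$ forces persistent mass near the origin throughout the evolution, preventing the neck from harmlessly drifting out to infinity in infinite time.
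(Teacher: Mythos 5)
Your strategy has a genuine gap at its core. You want to choose transverse Lagrangian planes $P_1,P_2$ with small angle difference so that \emph{no} self-expanding neck asymptotic to $P_1\cup P_2$ exists, and then derive a contradiction from a blow-down limit. But the theorem of Joyce, Lee, and Tsui (Theorem \ref{jlt} in this survey) produces an exact, zero-Maslov, smooth Lagrangian self-expander asymptotic to $P_1+P_2$ whenever neither $P_1+P_2$ nor $P_1-P_2$ is area-minimizing --- and for transverse Lagrangian planes with $\theta_1\neq\theta_2$ the sum $P_1+P_2$ is never special Lagrangian, so such pairs are generic rather than exceptional. The ``Lawlor relation'' you invoke governs the existence of \emph{minimal} (special Lagrangian) necks, not self-expanders; the blow-down of a long-time smooth flow would be a self-expander, and one essentially always exists. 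Worse, even if the classification went your way, arriving at a smooth self-expander in the blow-down is not a contradiction: it is precisely the expected \emph{non-singular} behaviour. Theorem \ref{sex} shows that self-expanders are attractors, so a Lagrangian that is merely a smoothing of two transverse planes should be expected to flow smoothly for all time, resolving the intersection through the expander. Your construction therefore does not force a singularity.

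The actual mechanism in the paper (from \cite{neves}, and reprised in Theorem \ref{singular} here) is different in kind. One works with equivariant Lagrangians generated by a profile curve $\gamma$ in $\C$, arranged so that $\gamma$ has a self-intersection enclosing a region of area $A_t$ not containing the origin. Gauss--Bonnet gives $\int_{c_t}\langle \vec k,\nu\rangle\,d\H^1\geq\pi$, and combined with the divergence theorem one gets $\frac{d}{dt}A_t\leq-\pi$, so the enclosed area must collapse before time $A_0/\pi$, producing a finite-time singularity; the whole configuration can be arranged to be asymptotic to two planes with arbitrarily small oscillation of the Lagrangian angle. Equivalently, the obstruction to long-time smooth existence is a nontrivial period of the Liouville form (a feature invisible to your two-plane asymptotics), not the nonexistence of an asymptotic model. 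If you want to salvage your approach, you would need to build this kind of topological/symplectic invariant into $L$ rather than relying on the choice of asymptotic planes.
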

These examples all live in $\C^2$ and so it was a natural  open question whether, in a compact Calabi-Yau, one could have ``good'' initial conditions which develop finite time singularities under the flow. As a matter of fact,    Thomas and Yau \cite{thomas} proposed a notion of  ``stability'' for the flow (see either \cite{thomas} or \cite{neves3} for the details) and conjectured that Lagrangian mean curvature flow of ``stable'' initial conditions will exist for all time and converges to a special Lagrangian. Unfortunately, their stability condition  is in general hard to check  and it seems to be  a highly  nontrivial statement the existence of Lagrangians which are ``stable'' in their sense and not special Lagrangian. Thus Wang \cite{wang} simplified the Thomas-Yau conjecture to become
\begin{conj} Let $L$ be a Lagrangian in a Calabi-Yau manifold which is embedded and Hamiltonian isotopic to a special Lagrangian  $\Sigma$. Then the Lagrangian mean curvature flow exists for all time and converges to $\Sigma$.
\end{conj}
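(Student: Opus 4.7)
The plan is to analyze the flow via Huisken's monotonicity formula together with the extra structure available in the Calabi-Yau setting. The Hamiltonian isotopy assumption is much stronger than mere embeddedness: it forces the Maslov class of $L$ to vanish (since $\Sigma$ is special and hence has zero Maslov class), and moreover implies that the Lagrangian angle $\theta$ lifts to a globally defined function on $L$ with prescribed average determined by the cohomology class. These properties are preserved by Lagrangian mean curvature flow, because the Hamiltonian isotopy class itself is preserved while the flow remains smooth.

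First I would rule out finite-time singularities. Suppose $L_t$ develops a singularity at some $T<\infty$. Applying Huisken's monotonicity and Type I rescaling at the singular point produces a limiting Lagrangian self-shrinker $\Sigma_\infty \subset \C^n$. Because the Hamiltonian isotopy structure persists to the blow-up, $\Sigma_\infty$ should inherit a globally defined Lagrangian angle, i.e.\ be an \emph{exact} (zero-Maslov) Lagrangian self-shrinker with bounded geometry. The key structural input is a rigidity theorem for such self-shrinkers --- for instance, the results from \cite{neves} showing that zero-Maslov self-shrinkers in $\C^2$ arising as Type I blow-ups decompose into planes through the origin --- which together with the embeddedness of $L_t$ would force $\Sigma_\infty$ to be a single plane, contradicting the assumption that a singularity forms.

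The harder Type II blow-ups yield eternal Lagrangian solutions, and one needs an analogous classification of eternal flows with zero Maslov class (translators, grim-reaper products, and the like) to rule them out. Assuming these obstructions can be overcome, long-time existence gives a family $\{L_t\}_{t\ge 0}$ with uniformly bounded geometry; since area is monotone decreasing along the flow and bounded below by the area of $\Sigma$, Brakke-Allard compactness extracts subsequential smooth limits which are minimal Lagrangians in the Hamiltonian isotopy class of $\Sigma$. To upgrade this to genuine convergence and identify the limit with $\Sigma$, I would invoke a Lojasiewicz--Simon gradient inequality at $\Sigma$, which is available because $\Sigma$ is area minimizing in its Hamiltonian isotopy class and the functional is analytic.

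The main obstacle is, unsurprisingly, the classification of Lagrangian singularity models. Even in $\C^2$, the Type II theory is essentially untouched: one does not know whether exotic eternal Lagrangian solutions with zero Maslov class exist, and the existing rigidity results for self-shrinkers demand asymptotic control that the naive blow-up procedure may not supply. A secondary difficulty is that preservation of the Hamiltonian isotopy class is known only so long as the flow stays smooth, so transferring this information across a singularity to a weak or Brakke limit appears to require genuinely new ideas. Without substantial progress on these two points --- arguably the central open problems in the subject --- the conjecture will remain out of reach.
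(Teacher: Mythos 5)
This statement is not a theorem of the paper but a conjecture (due to Wang), and the central point of the paper is that it is \emph{false}: Theorem \ref{main0} (quoting \cite[Theorem A]{neves3}) produces, in any four real dimensional Calabi--Yau and for any embedded Lagrangian $\Sigma$ --- in particular a special Lagrangian one --- a Lagrangian $L$ Hamiltonian isotopic to $\Sigma$ whose mean curvature flow develops a finite-time singularity. So no proof strategy can succeed, and your plan must break somewhere. It breaks at the very first step: you cannot rule out finite-time singularities, because they genuinely occur within the Hamiltonian isotopy class of a special Lagrangian.

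Concretely, the step that fails is your claim that the blow-up at a hypothetical singular time is a smooth zero-Maslov self-shrinker which rigidity forces to be a single plane. The rigidity statement you are invoking (Corollary \ref{cor-test} i)) applies only to \emph{smooth} self-shrinkers, whereas the actual blow-up limits produced by Theorem \ref{type1} are integral special Lagrangian current \emph{cones} with multiplicities --- in $\C^2$, unions of planes sharing a Lagrangian angle, possibly with multiplicity greater than one and possibly singular at the origin. Nothing about the Hamiltonian isotopy class, zero-Maslov condition, or embeddedness excludes these; indeed the construction behind Theorem \ref{singular} (and its compact analogue in \cite{neves3}) shows that initial data modelled on the surfaces $N(\varepsilon,R)$, which can be arranged to be embedded, exact, and with arbitrarily small oscillation of the Lagrangian angle, are forced to become singular in finite time by a Gauss--Bonnet area argument applied to the equivariant profile curve. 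Your concluding paragraph correctly identifies that the singularity models are not classified, but the situation is worse than ``out of reach'': the desired conclusion is simply not true, and the correct question (which the paper poses in its final section) is how to continue the flow through such singularities rather than how to prevent them.
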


Schoen and Wolfson \cite{schoen1} constructed solutions to Lagrangian mean curvature flow which become singular in finite time and where the initial condition is homologous to a special Lagrangian. On the other hand,  we remark that the flow {\em does} distinguish between isotopy class and homology class. For instance, on a two dimensional torus, a curve $\gamma$  with a single self intersection which is homologous to a simple closed geodesic will develop a finite time singularity under curve shortening flow while if we make the more restrictive assumption that $\gamma$ is isotopic to a simple closed geodesic, Grayson's Theorem \cite{grayson} implies that the curve shortening flow will exist for all time and sequentially converge to a simple closed geodesic.

To this end, the author has recently shown \cite[Theorem A]{neves3} that Wang's conjecture is false.

\begin{thm}\label{main0} Let $M$ be a four real dimensional Calabi-Yau and $\Sigma$ an embedded Lagrangian. There is $L$ Hamiltonian isotopic to $\Sigma$ so that the Lagrangian mean curvature flow starting at $L$ develops a finite time singularity.
\end{thm}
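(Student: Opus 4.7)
The plan is to graft the $\C^2$ example of the preceding theorem into a small Darboux chart centred at a point of $\Sigma$ via a Hamiltonian isotopy, and then to use pseudolocality of mean curvature flow to show that the resulting $L$ inherits its finite-time singularity. Concretely, let $L_0\subset\C^2$ denote the Lagrangian provided by that theorem: it is asymptotic to two transverse planes, has arbitrarily small Lagrangian angle oscillation, and develops a singularity at some finite time $T_0$ within a compact set $K_0$. Rescaling by a small factor $\epsilon$ produces $\epsilon L_0$, which has the same Lagrangian angle oscillation but develops its singularity at time $\epsilon^2 T_0$ and at scale $\sim\epsilon$.

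By Weinstein's Lagrangian neighbourhood theorem, a neighbourhood of a chosen point $p\in\Sigma$ in $M$ is symplectomorphic to a ball in $T^*\Sigma\cong\C^2$ with $\Sigma$ corresponding to the zero section. Within this chart I would first apply a Hamiltonian isotopy, compactly supported in the chart, that pulls a long thin ``finger'' out of $\Sigma$ and bends it back, so that near the tip of the finger $\Sigma$ locally looks like two nearly transverse planes joined by a smooth neck -- the same asymptotic configuration as $L_0$. A further Hamiltonian modification, supported in an even smaller ball at the finger tip, then replaces the tip by a truncated, suitably rescaled copy of $\epsilon L_0$, smoothly interpolating through an annular gluing region. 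Both $L_0$ and the finger-tip configuration are exact Lagrangians at infinity, and the freedom in the Lagrangian angle oscillation of $L_0$ should let one match primitives of the Liouville form along the gluing annulus, making the replacement an exact, hence Hamiltonian, deformation. The resulting Lagrangian $L$ is by construction Hamiltonian isotopic to the original $\Sigma$.

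For the singularity, $L$ agrees with (a Darboux image of) $\epsilon L_0$ on a ball of radius comparable to $\epsilon$, on which the ambient symplectic form and metric differ from the flat ones by $O(\epsilon^2)$. A pseudolocality estimate of Ecker--Huisken type for mean curvature flow then forces the flow of $L$ to remain $C^\infty$-close to the flow of $\epsilon L_0$ inside a slightly smaller ball on a time interval comparable to $\epsilon^2 T_0$. For $\epsilon$ small enough the ambient perturbations cannot destroy the singularity of $L_0$, so $L$ develops a finite-time singularity by time $\lesssim \epsilon^2 T_0$.

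The main obstacle, I expect, is the middle step: showing that the local replacement of the finger tip by a truncated $\epsilon L_0$ can be realised as a \emph{Hamiltonian}, not merely a Lagrangian, isotopy. This reduces to a primitive-matching problem for the Liouville form across the gluing annulus, and requires precise control of the behaviour of $L_0$ toward its two asymptotic planes. A secondary, more technical, point is upgrading pseudolocality to Lagrangian mean curvature flow in a curved ambient manifold; since $M$ has bounded geometry and the singularity of $L_0$ sits in a compact region, this should follow from standard arguments.
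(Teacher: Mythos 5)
Your overall skeleton---construct a singular model in $\C^2$, implant a rescaled copy into a Weinstein/Darboux chart around a point of $\Sigma$ by a compactly supported Hamiltonian isotopy, and then argue that the singularity survives---is indeed the architecture of the argument in \cite{neves3}, and the primitive-matching issue you flag for the gluing annulus is a real but manageable technicality. The genuine gap is the final step. Pseudolocality in the Ecker--Huisken sense gives interior curvature estimates at points where the initial datum is locally a graph with small gradient over a plane; it does not give continuous dependence of the flow on the initial condition up to and including the singular time, and it certainly does not imply that a flow starting $\delta$-close to a singular example must itself become singular. Continuous dependence holds only while the curvature of the model flow stays bounded, i.e.\ on $[0,\epsilon^2 T_0-\tau]$ for each $\tau>0$ with constants degenerating as $\tau\to 0$, so the perturbed flow could in principle slide smoothly past the would-be singular time. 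This is exactly the phenomenon of the Angenent--Vel\'azquez degenerate neckpinch \cite{av} recalled in the remark following Theorem \ref{singular}: a flow can be singular at time $T$ while arbitrarily small $C^{2,\alpha}$ perturbations of the initial datum remain smooth past $T$. So ``the singularity persists because the flows stay close'' is precisely the statement that needs proof; it is the entire content of Theorem \ref{singular} and of \cite{neves3}, not a consequence of pseudolocality.

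What the paper actually does to close this gap is far more structured. The local model $N(\varepsilon,R)$ is chosen equivariant, i.e.\ contained in $\mu^{-1}(0)$ for $\mu=x_1y_2-x_2y_1$, and asymptotic near the origin to a union of \emph{three} Lagrangian planes. Arguing by contradiction, one takes a sequence of smooth flows starting ever closer to the model, passes to a limiting Brakke flow, and uses Huisken's monotonicity with the test function $\mu^2$ (Lemma \ref{test} iii)--iv) and Corollary \ref{cor-test} iii)) to force the limit to be equivariant; the self-expander attractor theorem (Theorem \ref{sex}) together with uniqueness of the relevant self-expander identifies the limit at time $1$ as the equivariant surface generated by a curve with exactly one self-intersection; a Gauss--Bonnet computation shows the area enclosed by that self-intersection decreases at rate at least $\pi$, so the limit flow must become singular by a definite time; and a Hopf-index/angle-jump argument converts the resulting change of topology into a contradiction with the assumed smoothness of the approximating flows. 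Without some substitute for these ingredients, your construction produces a Hamiltonian isotopic $L$ but does not rule out that its flow exists smoothly for all time.
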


In any case the upshot is that it will be hard to avoid singularities for Lagrangian mean curvature flow and so it is important to understand how singularities form if one expects to use the flow to produce special Lagrangians. The subject is still in its infancy and so  the purpose of this survey it to collect some the basic techniques that have been used to tackle singularity formation  and exemplify how they can be applied in simple cases. For more on long time existence and convergence results the reader is encouraged to read \cite{wang, wang-2}. 

\noindent {\bf Acknowledgements:} The author would like to express his gratitude to Dominic Joyce for extensive comments that improved tremendously this survey.

\section{Preliminaries}
Let $J$ and $\omega$ denote, respectively, the standard complex structure on $\C^n$ and the standard symplectic
form on $\C^n$. We consider  the closed complex-valued $n$-form given by
$$\Omega\equiv dz_1\wedge\ldots\wedge dz_n$$ and the Liouville form given by
$$
\lambda\equiv\sum_{i=1}^{n}x_idy_i-y_idx_i, \quad d\lambda=2\omega,
$$
where $z_j=x_j+iy_j$ are complex coordinates of $\C^n$. We set
$$B_S=\{x\in\C^n\,|\, |x|<S\}\quad\mbox{and}\quad A(R,S)=\{x\in\C^n\,|\, R<|x|<S\}.$$
Given $f\in C^1(\C^n)$, $Df$ denotes its gradient in $\C^n$ and $\nabla f$ its gradient in  $L$.

A smooth $n$-dimensional submanifold $L$ in $\C^n$ is said to be {\em Lagrangian} if $\omega_L=0$ and a simple computation shows that 
$$\Omega_L=e^{i\theta}\vol_L,$$
where $\vol_L$ denotes the volume form of $L$ and $\theta$ is some multivalued function called the {\em
Lagrangian angle}. When the Lagrangian angle is a single valued function the Lagrangian is called {\em
zero-Maslov class} and if
$$\cos \theta\geq \varepsilon$$ for some positive $\varepsilon$, then $L$ is said to be {\em almost-calibrated}.
Furthermore, if $\theta\equiv\theta_0$, then $L$ is calibrated by
$$\re\,\left( e^{-i\theta_0}\Omega\right)$$ and hence area-minimizing. In this case, $L$ is referred as being
{\em special Lagrangian}.

 For a smooth Lagrangian, the
relation between the Lagrangian angle and the mean curvature is given by the following remarkable property (see
for instance \cite{thomas})
$$H=J\nabla \theta.$$
 A Lagrangian $L_0$ is said to be {\em rational} if for some real number $a$
$$\lambda\left(H_1(L_0, \Z)\right)= \{a2k\pi\,|\,k\in \Z\}.$$ Any Lagrangian having $H_1(L_0, \Z)$ finitely generated
can be perturbed in order to become rational and so this condition is not very restrictive. When $a=0$ the Lagrangian is called {\em exact} and this means there is $\beta\in C^{\infty}(L_0)$ for which $d\beta=\lambda$. Furthermore, if
$L_0$ is also zero-Maslov class, it was shown in  \cite[Section 6]{neves} that the rational condition is preserved by
Lagrangian mean curvature flow.

Let $L_0$ be a smooth Lagrangian in $\C^n$ with  area ratios  bounded above, meaning there is  $C_0$ so that
$$
\H^n\bigl(L_0\cap B_R(x)\bigr)\leq C_0 R^n\mbox{ for all }R>0\mbox{ and }x\in \C^n.
$$
Under suitable conditions, bounded area ratios, Lagrangian, zero-Maslov class, and almost-calibrated are conditions which are preserved by the flow. 
All solutions  to  Lagrangian mean curvature flow considered in this survey are assumed to have polynomial area growth, bounded Lagrangian angle, and a primitive for the Liouville  form with polynomial growth as well.  

A submanifold $L$ of Euclidean space is called a {\em self-expander} if $H=x^{\bot}/2$ and what this means is that $M_t=\sqrt t M$ is a smooth solution to mean curvature flow for all $t>0$. If $L$ is  an exact and  zero-Maslov class Lagrangian in $\C^n$ then 
$$H=\frac{x^{\bot}}{2}\implies 2J\nabla \theta=-J\nabla \beta\implies \nabla(\beta+2\theta)=0$$
and so $\beta+2\theta$ is constant.

Given any $(x_0,T)$ in $\R^{2n}\times \R$, we consider the backwards heat kernel
$$\Phi(x_0,T)(x,t)=\frac{\exp\left(-\frac{|x-x_0|^2}{4(T-t)}\right)}{(4\pi(T-t))^{n/2}}.$$

\section{Basic Techniques}\label{basic}
 
 I will describe the main technical tools that have been used to understand singularities.
 \subsection{White's Regularity Theorem}\label{white1} Let $(M_t)_{t\geq 0}$ be a smooth solution to mean curvature flow of $k$-submanifolds in $\R^n$. Consider the local Gaussian density ratios given by
 $$\Theta_t(x_0,l)=\int_{M_t}\Phi(x_0,l)(x,0)d\H^k.$$
 The following theorem is proven in \cite{white}. Its content is that if the local Gaussian density ratios are very close to one, the submanifolds enjoy a priori estimates on a slightly smaller set. 
 \begin{thm}[White's Regularity Theorem]\label{reg} There are $\varepsilon_0=\varepsilon_0(n,k),$ $C=C(n,k)$ so that if $\partial M_t\cap B_{2R}=\emptyset$ and 
 $$\Theta_t(x,l)\leq 1+\varepsilon_0\quad\mbox{for all}\quad l\leq R^2, x\in B_{2R}, \mbox{ and }t\leq R^2,$$
 then the $C^{2,\alpha}$-norm of $M_t$ in $B_R$ is bounded by $C/\sqrt t$ for all $t\leq R^2$.
\end{thm}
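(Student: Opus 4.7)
I would prove Theorem \ref{reg} by a blow-up/contradiction argument combined with Huisken's monotonicity formula, in the spirit of Brakke's and White's original regularity theorems. The guiding principle is that a pointwise density ratio close to $1$ forces closeness to a multiplicity-one plane at every scale, and this closeness is rigid enough to yield curvature estimates.

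\emph{Contradiction and point-picking.} Suppose the conclusion fails. By parabolic scaling reduce to $R=1$. Then one obtains a sequence of smooth flows $(M_t^i)$ on $B_2$ with $\Theta_t^i(x,l)\leq 1+\varepsilon_i$, $\varepsilon_i\to 0$, for which the claimed curvature estimate is violated at some $(x_i,t_i)\in B_1\times(0,1]$. A Hamilton-type point-picking step produces nearby points $(\tilde x_i,\tilde t_i)$ at which $Q_i:=|A|(\tilde x_i,\tilde t_i)\to\infty$ and $|A|\leq 2Q_i$ on a parabolic ball of radius $r_i/Q_i$, with $r_i\to\infty$. Parabolically rescaling by $Q_i$ about $(\tilde x_i,\tilde t_i)$ yields new smooth flows $(\widetilde M_s^i)$ with $|A|(0,0)=1$ and $|A|\leq 2$ on $B_{r_i}\times[-r_i^2,0]$. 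The density hypothesis is invariant under parabolic rescaling, so the rescaled flows still satisfy $\widetilde\Theta_s^i(x,l)\leq 1+\varepsilon_i$.

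\emph{Identification of the limit.} The uniform curvature bound together with standard Ecker--Huisken interior estimates gives uniform bounds on all derivatives of $A$; extract a smooth subsequential limit $\widetilde M_s$ defined on $\R^n\times(-\infty,0]$. Passing the density bound to the limit yields $\widetilde\Theta_s(x,l)\leq 1$ for every $(x,s)$ and every $l>0$. By Huisken's monotonicity the Gaussian density at every space-time point of $\widetilde M_s$ is therefore at most $1$. Since any integer-rectifiable self-shrinker has Gaussian density at least $1$, with equality only for a multiplicity-one plane, each tangent flow of $\widetilde M_s$ must be a flat $k$-plane. The equality case of Huisken's monotonicity then forces $\widetilde M_s$ itself to be a static $k$-plane through the origin. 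But a plane has $|A|\equiv 0$, contradicting $|A|(0,0)=1$.

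\emph{Upgrade to $C^{2,\alpha}$.} The contradiction rules out curvature blow-up and produces an estimate $|A|^2\leq C/t$ on $B_R$. Higher-order Ecker--Huisken estimates give matching bounds on $\nabla^k A$, and representing the flow locally as a graph over its tangent plane and applying parabolic Schauder theory upgrades these to the claimed $C^{2,\alpha}$ bound with constant $C/\sqrt t$. The principal obstacle I expect is the limit identification: precisely justifying that the density bound passes to the smooth limit and invoking the correct rigidity statement for integer-rectifiable self-shrinkers of density $\leq 1$, which is exactly where the smallness of $\varepsilon_0$ is used in an essential way.
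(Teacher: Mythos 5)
The survey does not actually prove Theorem \ref{reg}; it is quoted from White \cite{white}, so there is no internal argument to compare against. Your sketch is essentially White's original proof in the smooth setting: contradiction, point-picking weighted by the parabolic distance to the parabolic boundary (here by $\sqrt t$, which is exactly what produces an ancient limit flow out of the hypothesis that $\sqrt t\,|A|$ is unbounded), smooth subconvergence under the resulting curvature bounds, passage of the density hypothesis to the limit (legitimate because the rescaled scale $l/Q_i^2$ eventually drops below $R^2$ so the hypothesis applies, and the area ratios implied by the density bound control the Gaussian tails), and then rigidity. One simplification worth noting: in the smooth limit you do not need any classification of integer-rectifiable self-shrinkers of density at most $1$. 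At a smooth multiplicity-one point the small-scale Gaussian density equals $1$, so monotonicity (Theorem \ref{hmf}) together with your upper bound forces $\Theta\equiv 1$ at all scales about every point of the limit; equality in the monotonicity formula at two distinct scales about the same center already gives $H=0$ and $x^{\bot}=0$ on each time slice, hence a static multiplicity-one plane, contradicting $|A|(0,0)=1$. The remaining technical points you flag (point selection near $t=0$, tail control in passing the density bound to the limit, and the Ecker--Huisken/Schauder upgrade to $C^{2,\alpha}$) are genuine but standard, so your outline is a correct account of the proof this theorem receives in \cite{white}.
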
 
 
 \subsection{Monotonicity Formulas} 
In \cite{huisken} Huisken proved the following fundamental identity.

 \begin{thm}[Huisken's monotonicity formula]\label{hmf} Let $f_t$ be a smooth family of functions  on $L_t$. Then, assuming all quantities are finite, 
\begin{multline*}
	\frac{d}{dt}\int_{L_t} f_t\Phi(x_0,T)d\H^n=	\int_{L_t}\left( \frac{df_t}{dt}-\Delta f_t \right)\Phi(x_0,T)d\H^n\\
	-\int_{L_t}f_t\left| H+\frac{(\x-x_0)^{\bot}}{2(T-t_0)}\right|^2\Phi(x_0,T)d\H^n.
\end{multline*}
\end{thm}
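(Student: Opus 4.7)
The plan is to differentiate under the integral and reduce the theorem to a single pointwise identity for $\frac{d\Phi}{dt}+\Delta_L\Phi$ along the flow. Because $L_t$ evolves with normal velocity $H$, the first variation of area gives $\frac{d}{dt}(d\H^n)=-|H|^2\,d\H^n$, hence
$$\frac{d}{dt}\int_{L_t}f_t\Phi\,d\H^n=\int_{L_t}\left[\frac{df_t}{dt}\Phi+f_t\frac{d\Phi}{dt}-f_t\Phi|H|^2\right]d\H^n.$$
Under the standing assumption of polynomial area growth for $L_t$ (and polynomial growth of $f_t$), the Gaussian factor in $\Phi$ dominates every relevant integrand, so the divergence theorem on $L_t$ holds without boundary terms; in particular I would trade $\int_{L_t}\Delta f_t\,\Phi\,d\H^n$ for $\int_{L_t}f_t\,\Delta_L\Phi\,d\H^n$. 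After this swap the theorem reduces to the pointwise identity
$$\frac{d\Phi}{dt}+\Delta_L\Phi-|H|^2\Phi=-\left|H+\frac{(\x-x_0)^{\bot}}{2(T-t)}\right|^2\Phi.$$

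For the pointwise identity I would combine $\frac{d\Phi}{dt}=\partial_t\Phi+D\Phi\cdot H$ (the total derivative of $\Phi$ along a point of $L_t$ moving with velocity $H$) with the submanifold Laplacian formula $\Delta_L\Phi=\sum_i D^2\Phi(e_i,e_i)+D\Phi\cdot H$ for any ambient function; the extra $D\Phi\cdot H$ comes from $\sum_i(\bar{\nabla}_{e_i}e_i)^{\bot}=H$. From $\Phi=(4\pi(T-t))^{-n/2}\exp(-|\x-x_0|^2/(4(T-t)))$ one gets
$$\partial_t\Phi=\Phi\!\left[\frac{n}{2(T-t)}-\frac{|\x-x_0|^2}{4(T-t)^2}\right],\qquad D\Phi=-\frac{\x-x_0}{2(T-t)}\Phi,$$
and $\sum_iD^2\Phi(e_i,e_i)=\Phi\bigl[-\frac{n}{2(T-t)}+\frac{|(\x-x_0)^{\top}|^2}{4(T-t)^2}\bigr]$. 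Adding $\frac{d\Phi}{dt}$ and $\Delta_L\Phi$, the $n/(2(T-t))$ terms cancel, the two $D\Phi\cdot H$ contributions combine to $-\Phi(\x-x_0)^{\bot}\!\cdot\!H/(T-t)$, and $|\x-x_0|^2-|(\x-x_0)^{\top}|^2=|(\x-x_0)^{\bot}|^2$. These three pieces repackage exactly into $\Phi|H|^2$ minus the desired perfect square, giving the identity.

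The algebra is essentially forced once the correct formulas for $\frac{d\Phi}{dt}$ and $\Delta_L\Phi$ are in hand, so the only genuine obstacle is analytic: justifying the interchange of $\frac{d}{dt}$ with $\int_{L_t}$ and the integration by parts on a possibly noncompact $L_t$. Both reduce to dominated convergence because every integrand that appears is bounded by a polynomial in $|\x-x_0|$ times $\Phi$, and the exponential decay of $\Phi$ beats the polynomial area growth allowed by the standing hypotheses.
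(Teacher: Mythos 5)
Your derivation is correct and is the standard proof of the weighted monotonicity formula: first variation of area, integration by parts justified by the Gaussian decay against polynomial area growth, and the pointwise identity $\frac{d\Phi}{dt}+\Delta_L\Phi-|H|^2\Phi=-\bigl|H+\frac{(\x-x_0)^{\bot}}{2(T-t)}\bigr|^2\Phi$, whose verification you carry out correctly (the key cancellations $n/(2(T-t))$ and $|\x-x_0|^2-|(\x-x_0)^{\top}|^2=|(\x-x_0)^{\bot}|^2$, plus $D\Phi\cdot H=D\Phi\cdot H^{\bot}$, are all as they should be). The paper offers no proof of Theorem \ref{hmf}, only the citation to \cite{huisken}, so there is nothing to contrast with; the only discrepancy is that the $t_0$ appearing in the denominator of the stated formula is a typo for $t$, and your computation uses the correct $2(T-t)$.
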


The next lemma determines test functions to be used in Huisken's monotonicity formula. 
\begin{lemm}\label{test} Let $(L_t)_{t\geq 0}$ be a zero-Maslov class smooth solution to Lagrangian mean curvature flow. Then
\begin{itemize}
\item[i)] There is a smooth  family of functions $\theta_t\in C^{\infty}(L_t)$ such that 
$$H=J\nabla \theta_t\quad\mbox{and}\quad\frac{d}{dt}\theta_t^2=\Delta \theta_t^2-2|H|^2;$$
\item[ii)] Assume that $L_0$ is also exact. There is a smooth family of functions $\beta_t\in C^{\infty}(L_t)$ with $d\beta_t=\lambda$ and
 $$\frac{d}{dt}(\beta_t+2(t-T)\theta_t)^2=\Delta(\beta_t+2(t-T)\theta_t)^2-2|2(t-T)H-x^{\bot}|^2;$$
 \item[iii)] If $\mu\in C^{\infty}(\C^{n})$ is such that the one parameter family of diffeomorphisms $(\phi_s)_{s\geq 0}$ generated by $JD\mu$ is in $SU(n)$, then 
$$ \frac{d}{dt}\mu^2=\Delta\mu^2-2|\nabla \mu|^2;$$
\item[iv)] If $n=2$ and $\mu(z_1,z_2)=x_1y_2-x_2y_1$, then
$$\frac{d}{dt}\mu^2=\Delta\mu^2-2|\nabla \mu|^2;$$

\end{itemize}
\end{lemm}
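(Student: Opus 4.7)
The overall strategy is, for each part, to identify a function $f$ (namely $\theta_t$, $\beta_t + 2(t-T)\theta_t$, or $\mu$) for which $(\partial_t - \Delta)f$ is known, and then invoke the elementary identity $(\partial_t - \Delta)f^2 = 2f(\partial_t - \Delta)f - 2|\nabla f|^2$. The right-hand sides of (i)--(iv) will come out correctly once we identify the gradient.

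For (i), I would start from the standard fact that under Lagrangian mean curvature flow the Lagrangian angle is itself a solution of the heat equation, $(\partial_t - \Delta)\theta_t = 0$; this is obtained by differentiating $\Omega|_{L_t} = e^{i\theta_t}\vol_{L_t}$ in $t$, combined with $\partial_t \vol_{L_t} = -|H|^2 \vol_{L_t}$ and $H = J\nabla\theta_t$. The identity then follows since $|\nabla\theta_t|^2 = |J\nabla\theta_t|^2 = |H|^2$.

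For (ii), the primitive $\beta_t$ exists because exactness is preserved by the flow (cited in the preliminaries). The key computation is to show $(\partial_t - \Delta)\beta_t = -2\theta_t$. To derive this, differentiate $d\beta_t = \iota_t^*\lambda$ in $t$ and apply Cartan's magic formula: $\partial_t \iota_t^*\lambda = \iota_t^*\L_H\lambda = d(\lambda(H)) + \iota_t^*\iota_H(2\omega)$; and $\iota_H(2\omega)|_{L_t} = -2\,d\theta_t$ since $\omega(H,v) = \langle JH,v\rangle = -\langle \nabla\theta_t,v\rangle$. Thus $\partial_t\beta_t = \lambda(H) - 2\theta_t$ modulo an absorbable $t$-constant. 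A direct computation in a local tangent orthonormal frame $\{e_i\}$, using the Gauss formula $\bar{\nabla}_{e_i}e_i - \nabla^L_{e_i}e_i = II(e_i,e_i)$ and $\sum_i II(e_i,e_i) = H$ together with $\lambda(e_i) = \langle Jx,e_i\rangle$, yields $\Delta\beta_t = \langle Jx,H\rangle = \lambda(H)$. Subtracting gives the claimed parabolic equation, so $f := \beta_t + 2(t-T)\theta_t$ solves $(\partial_t-\Delta)f = -2\theta_t + 2\theta_t + 0 = 0$. Finally, $\nabla\beta_t = (Jx)^\top = Jx^\perp$ (since $J$ swaps tangent and normal on a Lagrangian) and $\nabla\theta_t = -JH$, so $\nabla f = -J\bigl(2(t-T)H - x^\perp\bigr)$ and $|\nabla f|^2 = |2(t-T)H - x^\perp|^2$.

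For (iii), I want $(\partial_t - \Delta)\mu = 0$ along $L_t$. Since $\mu$ is independent of $t$, $\partial_t \mu = D\mu\cdot H$, while in any local tangent orthonormal frame $\{e_i\}$ on $L_t$,
$$\Delta_{L_t}\mu = \sum_{i=1}^n \hess\mu(e_i,e_i) + D\mu\cdot H,$$
so it suffices to show the tangential trace of $\hess\mu$ vanishes on every Lagrangian plane. The hypothesis $(\phi_s)\subset SU(n)$ splits into: (a) $JD\mu$ is Killing, equivalent (via $\bar{\nabla}(JD\mu) = J\,\hess\mu$ being antisymmetric) to $\hess\mu$ commuting with $J$; and (b) $(\phi_s)$ preserves the Euclidean volume, equivalent to $\d(JD\mu) = \Delta_{\C^n}\mu = 0$. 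From (a), $\hess\mu(Je_i,Je_i) = \hess\mu(e_i,e_i)$, so the trace of $\hess\mu$ on $T_xL_t$ equals the trace on $J\,T_xL_t$; by (b) both sum to $\Delta_{\C^n}\mu = 0$, so each is zero, and the desired cancellation follows. Part (iv) then reduces to checking the hypothesis: a direct computation gives $JD\mu(z_1,z_2) = (z_2,-z_1)$, which generates the one-parameter family $(z_1,z_2)\mapsto(\cos s\,z_1 + \sin s\,z_2,\;-\sin s\,z_1 + \cos s\,z_2)$ of $SU(2)$-rotations. The main obstacle is really part (ii), where one must carefully merge a Lie-derivative-type computation for $\partial_t\beta_t$ with the divergence-type identity $\Delta\beta_t = \lambda(H)$, while tracking the implicit reparametrization of $L_t$ along the flow that is absorbed into a time-dependent constant.
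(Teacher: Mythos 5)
Your proposal is correct, and it is genuinely more self-contained than the paper's treatment. For parts i) and ii) the paper simply cites \cite[Section 6]{neves}, whereas you actually derive the evolution equations; your computation of $\Delta\beta_t=\lambda(H)$ via the second fundamental form, the Cartan-formula derivation of $\partial_t\beta_t=\lambda(H)-2\theta_t$ up to a $t$-dependent constant, and the identification $\nabla\beta_t=Jx^{\bot}$ are all correct and reproduce exactly the identity in ii). For part iii) you take a different route from the paper: the paper invokes the Thomas--Yau first variation formula $\frac{d}{ds}\theta_{s,t}=\Delta\mu$ for the Hamiltonian deformation $L_{s,t}=\phi_s(L_t)$, combined with the fact that $\phi_s\in SU(n)$ preserves the Lagrangian angle, to get $\langle H,D\mu\rangle=\Delta\mu$ in one line; you instead reduce to showing that the tangential trace of $\hess\mu$ vanishes on every Lagrangian plane, which you get from $\hess\mu$ commuting with $J$ together with $\Delta_{\C^n}\mu=0$. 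Your argument is more elementary (it avoids citing \cite[Lemma 2.3]{thomas}) at the cost of being restricted to the linear setting $\phi_s\in SU(n)$, which is all the lemma asks for. For iv) you take the first of the two options the paper offers (verifying $JD\mu$ generates a rotation in $SU(2)$), while the paper also records a direct computation using the heat equation for the coordinate functions.

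One justification in iii) is misattributed, though the conclusion stands: you claim $\Delta_{\C^n}\mu=0$ because $(\phi_s)$ ``preserves the Euclidean volume'' and identify this with $\d(JD\mu)=0$. But $\d(JD\mu)=\mathrm{tr}(J\hess\mu)$, not $\mathrm{tr}(\hess\mu)$, and it vanishes automatically once $J\hess\mu$ is antisymmetric; every element of $U(n)\subset SO(2n)$ preserves Euclidean volume, so this condition carries no extra information. The correct source of $\Delta_{\C^n}\mu=0$ is the \emph{special} condition $\det_{\C}\phi_s=1$: writing $JD\mu=Ax$ with $A\in\mathfrak{su}(n)$, one has $\hess\mu=-JA$ and $\mathrm{tr}_{\R}(\hess\mu)=2\,\mathrm{Im}\,\mathrm{tr}_{\C}A=0$ precisely because the complex trace of $A$ vanishes. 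With that correction your argument is complete.
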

\begin{rmk}
If $L$ is special Lagrangian, the third identity implies that $\mu$ is harmonic in $L$, a fact which was observed by Joyce in \cite[Lemma 3.4]{joyce}. The geometric interpretation is that $\mu$ is obtained from the moment map of some group action.
\end{rmk}
\begin{proof}
The first two equations can be found in \cite[Section 6]{neves}. We now show the third identity.  It suffices to show that
$$ \frac{d \mu}{dt}=\Delta \mu.$$
For each fixed $t$ consider the family $L_{s,t}=\phi_s(L_t)$. It is simple to see that $L_{s,t}$ is Lagrangian for all $s$ and the Lagrangian angle $\theta_{s,t}$ satisfies (see \cite[Lemma 2.3]{thomas})
$$\frac{d}{ds}\theta_{s,t}=\Delta \mu.$$
On the other hand, each $\phi_s\in SU(n)$, which means that $\theta_{s,t}=\theta_t\circ \phi^{-1}_s$ and thus $\frac{d}{ds}_{|s=0}\theta_{s,t}=-\langle \nabla \theta_t, Z \rangle$. Therefore
$$  \frac{d \mu}{dt}=\langle H, D\mu\rangle=-\langle \nabla \theta_t, Z \rangle= \frac{d}{ds}_{|s=0}\theta_{s,t}=\Delta \mu.$$

To show the last identity one can either argue that the one parameter family of diffeomorphisms  generated by $Z=JD\mu$ is in $SU(2)$ or see directly that, because
each coordinate function evolves by the linear heat equation, we have 
$$\frac{d \mu}{dt}=\Delta \mu-2\langle X_1^{\top},Y^{\top}_2\rangle+2\langle Y^{\top}_1,X^{\top}_2\rangle,$$
where $X_i=Dx_i, Y_i=Dy_i$ for $i=1,2$ and 
\begin{multline*}
\langle X_1^{\top},Y_2^{\top} \rangle-\langle Y_1^{\top},X_2^{\top} \rangle=-\langle (J Y_1)^{\top},Y_2 \rangle-\langle Y_1^{\top},X_2 \rangle\\
=-\langle J Y_1^{\bot},Y_2 \rangle-\langle Y_1^{\top}, X_2 \rangle=-\langle Y_1^{\bot}+Y_1^{\top},X_2 \rangle=-\langle Y_1,X_2\rangle=0.
\end{multline*}
\end{proof}

This lemma can be combined with Theorem \ref{hmf} to show

\begin{cor}\label{cor-test}\noindent
\begin{itemize}
\item[i)] A smooth zero-Maslov class Lagrangian  which is a self-shrinker must be a plane.
\item[ii)] If $(L_t)_{t>0}$ is an exact and smoth zero-Maslov class solution to Lagrangian mean curvature flow with area ratios bounded below and such that $L_{\varepsilon_i}$  converges in the varifold sense to a cone $L_0$ when $\varepsilon_i$ tends to zero then, for all $t>0$, $$L_t=\sqrt t L_1.$$  
\item[iii)] Let $\mu$ be a function satisfying the conditions of Lemma \ref{test} iii) or iv). If $(L_t)_{t>0}$ is a smooth solution to Lagrangian mean curvature flow such that, when $t$ tends to zero,  $L_t$ tends, in the Radon measure sense, to a measure supported in $\mu^{-1}(0)$, then $L_t\subset \mu^{-1}(0)$ for all $t$.
\end{itemize}
\end{cor}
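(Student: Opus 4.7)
All three parts are derived by a common strategy: feed the appropriate identity of Lemma \ref{test} into Huisken's monotonicity formula (Theorem \ref{hmf}), conclude that a certain weighted integral of a non-negative quantity is monotone non-increasing in $t$, and then compare values at two times to force the integrand to vanish.

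For (i), I would view the self-shrinker $L$ as the time $t=-1$ slice of the shrinking flow $L_t=\sqrt{-t}\,L$, $t<0$. Applying Theorem \ref{hmf} with $f_t=\theta_t^2$ (Lemma \ref{test} i)) and $(x_0,T)=(0,0)$, the self-shrinker condition makes the defect term $|H+x^{\bot}/(2(T-t))|^2$ vanish identically, leaving
$$\frac{d}{dt}\int_{L_t}\theta_t^2\,\Phi\,d\H^n = -2\int_{L_t}|H|^2\,\Phi\,d\H^n.$$
On the other hand, the change of variables $y=x/\sqrt{-t}$ together with scale invariance of $\theta$ shows that the left hand side is $t$-independent, so $|H|\equiv 0$. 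Combined with $H=-x^{\bot}/2$ this forces $x^{\bot}\equiv 0$, so $L$ is a smooth Lagrangian cone through the origin, hence a plane.

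For (iii), Huisken's formula with $f_t=\mu^2$ and any fixed $(x_0,T)$ gives, via Lemma \ref{test} iii) or iv), that $\int_{L_t}\mu^2\,\Phi(x_0,T)\,d\H^n$ is non-increasing in $t$. The hypothesis that $L_t$ tends as Radon measures to a measure supported in $\mu^{-1}(0)$ forces this integral to vanish as $t\to 0^+$, and monotonicity together with $\mu^2\geq 0$ then forces it to be identically zero for $t>0$. Since $\Phi>0$ and $L_t$ is smooth, $\mu\equiv 0$ on each $L_t$, i.e.\ $L_t\subset\mu^{-1}(0)$.

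For (ii) I would use the same recipe with $f_t=(\beta_t+2t\theta_t)^2$ (Lemma \ref{test} ii) with the free parameter set to $T=0$) and center $(0,T_0)$ for an arbitrary $T_0>0$. Once one knows that $\lim_{t\to 0^+}\int_{L_t}(\beta_t+2t\theta_t)^2\,\Phi(0,T_0)\,d\H^n=0$, monotonicity forces this integral to be identically zero for $t>0$, hence $\nabla(\beta_t+2t\theta_t)\equiv 0$; using $J\nabla\beta_t=-x^{\bot}$ and $J\nabla\theta_t=H$, this is exactly the self-expander equation $2tH=x^{\bot}$, whose smooth solutions satisfy $L_t=\sqrt{t}\,L_1$. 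The main obstacle is justifying the vanishing of the limit at $t=0^+$: here one would parabolically rescale the flow, exploit that the cone $L_0$ is invariant under dilations together with the area-ratio lower bound to rule out mass loss in the varifold convergence, and normalize the additive constants of $\beta_t$ component by component, so that the monotone quantity at the cone is legitimately zero.
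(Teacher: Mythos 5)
Your proposal is correct and follows essentially the same route as the paper: in each part you plug the test function from Lemma \ref{test} into Huisken's monotonicity formula, use monotonicity plus the vanishing of the weighted integral at one end to force the integrand to vanish, and in (ii) you justify the vanishing of $\lim_{t\to 0}\int(\beta_t+2t\theta_t)^2\Phi$ exactly as the paper does, via the fact that $\lambda$ (hence $d\beta_t$) degenerates on the limiting cone so that $\beta_{t_i}$ tends to a constant that can be normalized to zero. The only cosmetic difference is that in (i) you observe the defect term vanishes by the self-shrinker equation before concluding $H\equiv 0$, whereas the paper deduces both vanishings simultaneously from the constancy of the scale-invariant quantity; the conclusions are identical.
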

 \begin{rmk}\noindent
 \begin{itemize}
 \item[a)] Assuming almost-calibrated, the first statement was proven by Wang in \cite{Wa1} (see also \cite{chen1} for a similar result in the symplectic case). The second statement was proven in \cite{neves2}.
 \item[b)] It is important in i) that we assume $L$ to have bounded Lagrangian angle and no boundary. Otherwise, as it was pointed out by Joyce, the universal cover of a circle or half circle in $\C$ would be counterexamples. 
 \item[c)] It is important in ii) that we assume $L_t$ to be smooth for all $t>0$ because otherwise the result would not be true. For instance, for curve shortening flow, $\sigma_t$ could be $\{(x,y)\,|\, xy=0\}$ for all $t\leq 2$ and $\sigma_t=\sqrt{t-2}\sigma_3$ for all $t>2$, where $\sigma_3$ is a self-expander asymptotic to $\sigma_2$.
 \end{itemize}
\end{rmk}
\begin{proof}
To prove i) set $L_t=\sqrt {-t}L$  which is a smooth solution to Lagrangian mean curvature flow for $t<0$. Choose $(x_0,T)=(0,0)$ and consider
$$\theta(t)=\int_{L_t} \theta_t^2\Phi(x_0,T)d\H^n.$$
Scale invariance implies that $\theta(t)$ is constant as a function of $t$ and so its derivative must be zero.  Hence, combining Theorem \ref{hmf} with Lemma \ref{test} i) we have that $L$ has $H=0$. Moreover, $L$ is a self-shrinker and so it must  have $x^{\bot}+2H=0$, which means that $L$ is a smooth minimal cone. Thus, $L$ must be a plane.

To prove the second statement note that the function $\beta_t$ can be defined as 
$$\beta_t(x)=\int_{\gamma(p_t,x)}\lambda+\beta_t(p_t),$$
where $p_t$ belongs to $L_t$ and $\gamma(p_t,x)$ is any path in $L_t$ connecting $p_t$ to $x$. 

Because $L_0$ is a varifold with $x^{\bot}=0$ we have that $\lambda=0$ when restricted to $L_0$ and thus, from varifold convergence and the fact  area ratios are bounded  below, we have that when $t_i$ tends to zero $\beta_{t_i}$ converges uniformly to a constant which we can assume to be zero. As a result, we obtain that
$$\gamma(t)=\int_{L_t} (2t\theta_t+\beta_t)^2\Phi(0,1)d\H^n$$
has $\gamma(t_i)$ tending to zero. Furthermore, we have from Theorem \ref{hmf} and Lemma \ref{test} ii) that 
$$\frac{d}{dt}\gamma(t)\leq 	-2\int_{L_t}\left| 2tH-x^{\bot}\right|^2\Phi(0,1)d\H^n$$
which means that $\gamma(t)$ is non-increasing and so it must be zero for all $t$. Therefore $2tH-x^{\bot}=0$ on $L_t$ and this implies $L_t=\sqrt t L_1$.

To show iii) note that from 
Lemma \ref{test} iii) and Theorem \ref{hmf} we have for all $t<T$
$$\frac{d}{dt}\int_{L_t} \mu^2\Phi(0,T)d\H^n\leq 0.$$
The result follows because 
$$\lim_{t\to 0}\int_{L_t} \mu^2\Phi(0,T)d\H^n=0.$$

\end{proof}

\subsection{Poincar\'e type Lemma} In order to study blow-ups of singularities it is important to have a criteria which implies that a function $\alpha_i$ on $N^i$ with $L^2$ norm of the gradient converging to zero must converge to a constant. It is simple to construct a sequence $N^i$ (not necessarily Lagrangian) converging (in some suitable weak sense) to a disjoint union of two spheres $S_1, S_2$ and a sequence of functions $\alpha_i$ with $L^2$ norm of the gradient converging to zero so that $\alpha_i$ tends to $1$ on $S_1$ and $-1$ on $S_2$. 
The next proposition gives
conditions which rule out this possibility. 
\begin{lemm}\label{cont}
Let $(N^i)$ and $(\alpha_i)$ be a sequence of smooth $k$-submanifolds in $\R^n$  and  smooth functions on $N^i$
respectively, such that 
  the following properties hold for some $R>0$:
\begin{itemize}
\item[a)] There exists a constant $D_0$ such that  $$\H^k(N^i\cap B_{3R}))\leq D_0R^k\mbox{ for all }i\in\N$$
and  
$$\left(\H^k(A)\right)^{(k-1)/k}\leq D_0 \H^{k-1}(\partial A)$$
for every open subset $A$ of $N^i\cap B_{3R}$ with rectifiable boundary.
\item[b)] There exists a constant $D_1$ such that  for all $i \in \N$
$$\sup_{N^i\cap B_{3R}}|\nabla \alpha_i|+R^{-1}\sup_{N^i\cap B_{3R}}|\alpha_i|\leq D_1.$$ 
\item[c)]$$\lim_{i \to \infty}\int_{N^i\cap B_{3R}}|\nabla \alpha_i|^2 d\H^k=0.$$
\item[d)] $\partial N^i \cap B_{3R}=0$ and   $N^i\cap B_{2R}$ contains only one connected component which intersects $B_R$.
\end{itemize}
There is   $\alpha$ such that, after passing to a subsequence, $$\lim_{i\to\infty}\sup_{N^i\cap B_R} |\alpha_i-\alpha|=0.$$
\end{lemm}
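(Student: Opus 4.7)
The plan is to argue by contradiction, following the chain: vanishing Dirichlet energy yields, via the coarea formula and the isoperimetric inequality (a), vanishing oscillation of $\alpha_i$ on the connected region provided by (d).

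First, by (b) the $\alpha_i$ are uniformly bounded and uniformly Lipschitz on $N^i\cap B_{3R}$, so picking basepoints $p_i\in N^i\cap B_R$ (which exist by (d)) and passing to a subsequence, I may assume $\alpha_i(p_i)\to\alpha\in\R$. It suffices to show $\sup_{N^i\cap B_R}|\alpha_i-\alpha_i(p_i)|\to 0$. Suppose not: passing to a subsequence, there exist $\delta>0$ and $q_i\in N^i\cap B_R$ with $\alpha_i(q_i)-\alpha_i(p_i)\geq\delta$ (the other sign is symmetric). Setting $\rho:=\delta/(4D_1)$, the Lipschitz bound places $\alpha_i$ within $\delta/4$ of $\alpha_i(p_i)$ on the intrinsic ball $B^{N^i}_\rho(p_i)$ and within $\delta/4$ of $\alpha_i(q_i)$ on $B^{N^i}_\rho(q_i)$; both lie in the extrinsic $B_{R+\rho}\subset B_{2R}$ (for $\rho<R$), hence by (d) both are contained in the unique component $C_i\subset N^i\cap B_{2R}$ meeting $B_R$. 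Feeding (a) into the ODE $v'(r)\geq D_0^{-1}v(r)^{(k-1)/k}$ for $v(r):=\H^k(B^{N^i}_r(\cdot))$ gives the standard density bound
$$\H^k\bigl(B^{N^i}_\rho(p_i)\bigr),\ \H^k\bigl(B^{N^i}_\rho(q_i)\bigr)\ \geq\ V_0 := (\rho/(kD_0))^k.$$

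On the other hand, Cauchy--Schwarz combined with (a) and (c) gives $\int_{N^i\cap B_{3R}}|\nabla\alpha_i|\,d\H^k\to 0$, and then the coarea formula yields
$$\int_\R \H^{k-1}\bigl(\{\alpha_i=c\}\cap N^i\cap B_{3R}\bigr)\,dc\ \to\ 0.$$
A contradiction will follow once I establish the uniform lower bound
$$\H^{k-1}\bigl(\{\alpha_i=c\}\cap N^i\cap B_{3R}\bigr)\geq c'>0,$$
valid for every $c$ in the interval $I_i:=(\alpha_i(p_i)+\delta/4,\ \alpha_i(q_i)-\delta/4)$, with $c'$ independent of $i$ and $c$; integrating over $I_i$ (of length $\geq\delta/2$) will then contradict the previous display. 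The reason such a bound should hold is that in the connected $C_i$, the level $\{\alpha_i=c\}$ separates the two intrinsic balls, which lie in $\{\alpha_i<c\}$ and $\{\alpha_i>c\}$ respectively and have area at least $V_0$ each. To extract this quantitatively I apply (a) to $A_c^i:=\{\alpha_i<c\}\cap C_i$ after a preliminary slicing: using coarea on $|x|$ together with (a), one picks $r_i\in(3R/2,2R)$ with $\H^{k-1}(N^i\cap\partial B_{r_i})$ bounded uniformly in $i$, restricts to $N^i\cap B_{r_i}$, and applies (a) to both $A_c^i$ and its complement (both of area $\geq V_0$).

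The main obstacle is this last step: upgrading the absolute isoperimetric inequality in (a) to a \emph{relative} version, uniform in $i$, that isolates the level-set contribution $\H^{k-1}(\{\alpha_i=c\}\cap N^i\cap B_{r_i})$ from the outer-boundary contribution along $N^i\cap\partial B_{r_i}$. One must arrange that $V_0^{(k-1)/k}/D_0$ dominates the slicing contribution, which may require iterating the slicing over a family of concentric annuli (to make the outer-boundary contribution genuinely small) or combining (a) with a Sobolev--Poincar\'e estimate on $C_i$ extracted from (a) and the connectedness in (d). This balancing of constants is where the Lipschitz hypothesis in (b), the volume and isoperimetric bounds in (a), and the connectedness in (d) must all be used simultaneously, and constitutes the heart of the technical difficulty.
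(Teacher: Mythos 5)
Your setup (Lipschitz continuity from b), intrinsic-ball volume lower bounds $\H^k(\hat B_\rho)\geq K\rho^k$ from the isoperimetric hypothesis, Cauchy--Schwarz plus coarea to get $\int|\nabla\alpha_i|\to 0$, and connectedness from d) to force $\alpha_i$ to take all intermediate values) matches the paper's ingredients, but your endgame has a genuine gap, and it is exactly the one you flag yourself. To contradict $\int|\nabla\alpha_i|\,d\H^k\to 0$ you need the uniform lower bound $\H^{k-1}(\{\alpha_i=c\}\cap N^i\cap B_{r_i})\geq c'>0$ for all $c$ in a fixed interval, i.e.\ a \emph{relative} isoperimetric (Poincar\'e) inequality on the connected component $C_i$. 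This does not follow from hypothesis a): applying a) to $A_c^i=\{\alpha_i<c\}\cap C_i\cap B_{r_i}$ gives $V_0^{(k-1)/k}\leq D_0\bigl(\H^{k-1}(\{\alpha_i=c\}\cap\cdots)+\H^{k-1}(\{\alpha_i<c\}\cap N^i\cap\partial B_{r_i})\bigr)$, and the slicing term is generically of order $R^{k-1}$ while your target $V_0^{(k-1)/k}/D_0$ is of order $\rho^{k-1}\ll R^{k-1}$; so the inequality is consistent with the level set having measure zero. Hypothesis d) is only qualitative connectedness and gives no quantitative Poincar\'e constant, so ``iterating the slicing'' or invoking a Sobolev--Poincar\'e estimate is not a repair, it is the missing theorem. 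As written the proof does not close.

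The paper's proof avoids this entirely by running the contradiction against the volume \emph{upper} bound in a) rather than against the coarea integral. Suppose $\alpha_i(y_i)\to\bar\alpha\neq\alpha$; pick $p$ disjoint closed value-intervals $I_1,\dots,I_p$ between $\alpha$ and $\bar\alpha$, with endpoints chosen generically so that (by coarea and c)) $\H^{k-1}(\alpha_i^{-1}(\partial I_l)\cap B_{3R})\to 0$. Connectedness d) gives a point $y_l^i\in B_{2R}$ with $\alpha_i(y_l^i)$ in the middle of $I_l$, the Lipschitz bound b) puts an intrinsic ball of definite radius inside $\alpha_i^{-1}(I_l)$, and then the differential inequality for $\psi(s)=\H^k(\alpha_i^{-1}(I_l)\cap B_s(y_l^i))$, namely $\psi'\geq K\psi^{(k-1)/k}-\H^{k-1}(\alpha_i^{-1}(\partial I_l)\cap B_{3R})$, propagates this to $\psi(R)\geq KR^k$. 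Here the only ``bad'' boundary term is the endpoint level sets, which vanish by the generic choice --- the spherical slice $\partial B_s\cap\alpha_i^{-1}(I_l)$ is not an error term but is precisely $\psi'$ via coarea on $|x-y_l^i|$. Summing over $l$ gives $\H^k(N^i\cap B_{3R})\geq pKR^k$, contradicting a) for $p$ large. You should restructure your argument along these lines: lower-bound the volume of slabs $\alpha_i^{-1}(I_l)$ rather than the area of separating level sets.
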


 \begin{rmk}
 A version of this lemma with stronger hypothesis was proven in \cite[Proposition A.1]{neves}. Hypothesis a) is needed so that we have some  control on the sequence $N^i$. Note that it rules out the example, described above, of $N^i$ degenerating into two spheres. Hypothesis b) is also needed because if $N^i=\{(z,w)\in \C^2\,|\, zw=1/i\}$, it is not hard to construct a sequence $\alpha_i$ for which c) is true but $\alpha_i$ does not tend to a constant function. 
 Finally, the last hypothesis is needed because otherwise the lemma would fail for trivial reasons. 
 \end{rmk}

\begin{proof} 
Throughout  this proof, $K=K(D_0,D_1,k)$ denotes a generic constant depending only on the mentioned
quantities. Choose any sequence $(x_i)$ in $N^i\cap B_R$. After passing to a subsequence, we have  $$
\lim_{i \to\infty}x_i=x_0\quad\mbox{and}\quad\lim_{i \to
  \infty}\alpha_i(x_i)=\alpha
$$ for some $x_0 \in B_R$ and $\alpha \in \R$. Furthermore, consider 
a sequence $(\varepsilon_j)$ converging to zero and define
$$ N^{i,\alpha,j}=\alpha_i^{-1}([\alpha-\varepsilon_j,\alpha+\varepsilon_j]).$$
The sequence $(\varepsilon_j)$ can be chosen so that,  for all $j\in \N$,
$$
\lim_{i \to \infty}\H^{k-1}\bigl( \partial N^{i,\alpha,j}\cap B_{3R}\bigr)=0
$$
because, by the coarea formula, we have
\begin{multline*}
\lim_{i \to \infty}\int_{-\infty}^{\infty}\H^{k-1}\bigl( \{\alpha_i=s\}\cap B_{3R}\bigr)ds
=\lim_{i \to \infty}\int_{N^i\cap B_{3R}}|\nabla \alpha_i|d\H^{k}\\
\leq \lim_{i \to \infty}K R^{k/2}\left(\int_{N^i\cap B_{3R}}|\nabla \alpha_i|^2d\H^{k}\right)^{1/2} =0.
\end{multline*}

\begin{lemm}\label{below-area}
For every $j \in \N$ $$ \liminf_{i\to\infty}\H^k\bigl(N^{i,\alpha,j}\cap B_R(x_0)\bigr)\geq K R^k.
$$
\end{lemm}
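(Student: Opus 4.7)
The plan is to work with balls centred at $x_i$ rather than $x_0$ and set $w_i(r):=\H^k\bigl(N^{i,\alpha,j}\cap B_r(x_i)\bigr)$; since $x_i\to x_0$, for $i$ large we have $B_{R/2}(x_i)\subset B_R(x_0)$, so it suffices to establish $w_i(R/2)\geq K R^k$ uniformly in $i$. The argument then proceeds in two stages: first a small-scale foothold of area coming from the Lipschitz bound on $\alpha_i$ combined with the intrinsic form of the isoperimetric hypothesis, and then a large-scale propagation via an isoperimetric ODE on the nested family $N^{i,\alpha,j}\cap B_r(x_i)$.

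For the foothold, I would fix $\rho_0:=\varepsilon_j/(2D_1)$. Hypothesis (b) guarantees that any $y\in N^i$ within intrinsic distance $\rho_0$ of $x_i$ satisfies $|\alpha_i(y)-\alpha_i(x_i)|\leq \varepsilon_j/2$, and since $\alpha_i(x_i)\to\alpha$ the whole intrinsic ball $B^{N^i}_{\rho_0}(x_i)$ lies in $N^{i,\alpha,j}$ once $i$ is large. Applying the isoperimetric inequality in (a) intrinsically together with the coarea formula to $V(\rho):=\H^k\bigl(B^{N^i}_\rho(x_i)\bigr)$ yields the standard differential inequality $V^{(k-1)/k}\leq D_0 V'$, and hence $V(\rho_0)\geq (\rho_0/(kD_0))^k$. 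Because intrinsic distance dominates extrinsic distance, $B^{N^i}_{\rho_0}(x_i)\subset B_{\rho_0}(x_i)$, so $w_i(\rho_0)\geq (\rho_0/(kD_0))^k$.

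For the propagation, apply the isoperimetric inequality to $A:=N^{i,\alpha,j}\cap B_r(x_i)$ for $r\in[\rho_0,R/2]$. Its boundary in $N^i$ decomposes into a level-set portion contained in $\partial N^{i,\alpha,j}\cap B_{3R}$, of $(k-1)$-volume $\eta_i:=\H^{k-1}(\partial N^{i,\alpha,j}\cap B_{3R})$ which goes to $0$ as $i\to\infty$ by the specific choice of $\varepsilon_j$, and a sphere portion whose $(k-1)$-volume is dominated by $w_i'(r)$ via coarea. Together these produce the differential inequality
$$
w_i(r)^{(k-1)/k}\leq D_0\bigl(w_i'(r)+\eta_i\bigr) \quad \text{for a.e.\ } r\in[\rho_0,R/2].
$$

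The main obstacle is running this ODE cleanly despite the additive error $\eta_i$. The key observation is that as soon as $w_i(r)^{(k-1)/k}\geq 2D_0\eta_i$, the error is absorbed and one obtains $(w_i^{1/k})'\geq 1/(2kD_0)$. For $i$ so large that $2D_0\eta_i\leq (\rho_0/(kD_0))^{k-1}$, the foothold above supplies this threshold at $r=\rho_0$, and monotonicity of $w_i$ preserves it throughout $[\rho_0,R/2]$. After shrinking $\varepsilon_j$ if necessary so that $\rho_0\leq R/4$, integrating from $\rho_0$ to $R/2$ yields $w_i(R/2)^{1/k}\geq R/(8kD_0)$, and hence the claim with $K=(8kD_0)^{-k}$ depending only on $D_0$ and $k$.
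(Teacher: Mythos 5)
Your proposal is correct and follows essentially the same route as the paper: an intrinsic-ball area foothold obtained from the Lipschitz bound in (b) together with the isoperimetric hypothesis in (a), followed by an extrinsic isoperimetric differential inequality for $r\mapsto\H^k(N^{i,\alpha,j}\cap B_r(x_i))$ in which the level-set boundary term $\H^{k-1}(\partial N^{i,\alpha,j}\cap B_{3R})\to 0$ appears as an error. The only cosmetic differences are that you close the ODE by absorbing the error once above a threshold, where the paper integrates it against $\psi_{i,j}^{(1-k)/k}$ and notes that integral is bounded, and that your ``shrink $\varepsilon_j$'' step is unnecessary: one can simply cap the foothold radius at $\min\{\varepsilon_j/(2D_1),R/4\}$, exactly as the paper does with $r_j=\min\{s_j,KR/2\}$.
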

\begin{proof}
Given $y_i\in N^i$, denote by $\hat{B}_{r}(y_i)$ the {\em intrinsic} ball  in $N^i$ of radius $r$.  We start by showing that $\H^{k}(\hat{B}_{r}(y_i))\geq Kr^k$ for all $y_i\in B_{2R}\cap N^i$ and $r<R$.
Set
$$\psi(r)=\H^k\left(\hat{B}_{r}(x_i)\right)$$ which has, for all $r<R$, derivative given by
$$\psi^{\prime}(r)=\H^{k-1}\left(\partial \hat{B}_{r}(y_i)\right)\geq K (\psi(r))^{(k-1)/k}.$$
Hence, integration implies 
$\psi(r)\geq K r^k$ and the claim follows. From hypothesis b) there is $s_j=s(j,k,D_0,D_1,R)<R$ such that, for all $i$ sufficiently large, $\hat B_{s_j}(x_i)\subset N^{i,\alpha,j}$ and thus 
$$\H^{k}\left(B_{s}(x_i)\cap N^{i,\alpha,j}\right)\geq Ks^k\quad \mbox{for all }s\leq s_j.$$
Set
$$
\psi_{i,j}(s)=\H^k\bigl(N^{i,\alpha,j}\cap B_s(x_i)\bigr)
$$
which has, by the coarea formula, derivative satisfying 
\begin{align*}
\psi_{i,j}^{\prime}(s)& = \oint_{\partial B_s(x_i)\cap N^{i,\alpha,j}}\frac{|x-x_i|}{|(x-x_i)^{\top}|}d\H^{k-1} \geq \H^{k-1}\left(\partial B_s(x_i)\cap N^{i,\alpha,j}\right)\\
& = \H^{k-1}\left(\partial \left(B_s(x_i)\cap N^{i,\alpha,j}\right)\right)-\H^{k-1}\left( B_s(x_i)\cap\partial N^{i,\alpha,j}\right)\\
&\geq  K\left(\H^{k}\left(B_s(x_i)\cap N^{i,\alpha,j}\right)\right)^{(k-1)/k}-\H^{k-1}\left(\partial N^{i,\alpha,j}\cap B_{3R}\right)\\
&=K\left(\psi_{i,j}(s)\right)^{(k-1)/k}-\H^{k-1}\left(\partial N^{i,\alpha,j}\cap B_{3R}\right)
\end{align*}
for almost all $s$. Integration implies 
$$
\psi_{i,j}^{1/k}(R)\geq K(R-r_j)-\H^{k-1}\left(\partial N^{i,\alpha,j}\cap B_{3R}\right)\int_{r_j}^R\psi_{i,j}^{(1-k)/k}(t)dt,
$$
where $r_j=\min\{s_j,KR/2\}$. Note the integral term is bounded independently of $i$ for all $i$ sufficiently large and so
$$\liminf_{i\to\infty}\psi_{i,j}^{1/k}(R)\geq K(R-r_j)\geq KR/2.$$
This proves  Lemma \ref{below-area}.

\end{proof}
Suppose there is $y_i\in N^i\cap B_R$ converging to $y_0\in B_R$ so that  $\alpha_i(y_i)$ tends to $\bar \alpha$ distinct from $\alpha$. 
Repeating the same type of arguments we find a closed interval $I$ disjoint from
$[\alpha-\varepsilon_j,\alpha+\varepsilon_j]$ such that, after passing to a subsequence,
$$
\lim_{i \to \infty}\H^k\bigl(\alpha_i^{-1}(I)\cap B_R(y_0)\bigr)\geq K R^k.
$$
Given any positive integer $p$, pick disjoint closed intervals
$$I_1,\cdots,I_p$$ lying between $I$ and
$[\alpha-\varepsilon_j,\alpha+\varepsilon_j]$. Hypothesis d) implies that, for all $i$ sufficiently large, 
$\alpha_i^{-1}(I_l)\cap B_{2R}$ is not empty. Hence, arguing as before, we find
$y_1,\dotsc, y_p$ in $B_{2R}$
 such that, after passing to a subsequence, $$
 \lim_{i \to \infty}\H^k\bigl(\alpha_i^{-1}(I_l)\cap B_R(y_l)\bigr)\geq K R^k,
$$
for all $l$ in $\{1,\dotsc,p\}$. This implies 
\begin{align*}
\lim_{i \to \infty}\H^k\bigl(N^i\cap B_{2R}\bigr) & \geq \lim_{i \to
  \infty}\sum_{l=1}^p\H^k\bigl(\alpha_i^{-1}(I_j)\cap B_R(y_l)\bigr)\\
&\geq pKR^k.
\end{align*}
Choosing $p$ sufficiently large we get a contradiction. This proves Lemma \ref{cont}.
\end{proof}

The next result gives conditions which guarantee   Lemma \ref{cont} a)  holds.

\begin{lemm}\label{aa} Let $L$ be a Lagrangian in $\C^n$ such that $\partial L \cap B_R=\emptyset$ and either i)
$$\inf_{L \cap B_R}\cos \theta\geq \delta$$
or ii) $n=2$ and for some $\varepsilon$ small enough
$$\int_{L \cap B_R}|H|^2d\H^2\leq \varepsilon.$$ 

There is $D=D(\delta,n)$ so that $$\left(\H^n(A)\right)^{(n-1)/n}\leq D \H^{n-1}(\partial A)$$
for all open subsets $A$ of $L\cap B_{R}$ with rectifiable boundary.
\end{lemm}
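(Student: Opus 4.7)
The plan is to derive the isoperimetric inequality separately in each case, exploiting the particular geometric structure available: the calibration $\re\,\Omega$ together with the Federer--Fleming filling theorem in case i), and the Michael--Simon Sobolev inequality on submanifolds together with an absorption argument in case ii).

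For case i), the almost-calibrated hypothesis implies that the closed $n$-form $\alpha = \re\,\Omega$ on $\C^n$ satisfies $\alpha|_L = \cos\theta\,\vol_L \geq \delta\,\vol_L$ on $L\cap B_R$, while $\alpha$ has comass one on $\C^n$. Given an open $A\subset L\cap B_R$ with rectifiable boundary, I would view $\partial A$ as a rectifiable $(n-1)$-cycle in $\R^{2n}$ and invoke the Euclidean isoperimetric inequality (Federer--Fleming) to fill it by a rectifiable $n$-current $T$ in $\R^{2n}$ satisfying $\partial T=\partial A$ and $M(T)\leq C(n)\,\H^{n-1}(\partial A)^{n/(n-1)}$. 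Because $[A]-T$ is then an $n$-cycle in the contractible space $\R^{2n}$ and $\alpha$ is closed, Stokes' theorem gives $\int_A\alpha=\int_T\alpha$, and the comass bound yields
$$\delta\,\H^n(A)\leq \int_A\alpha=\int_T\alpha\leq M(T)\leq C(n)\,\H^{n-1}(\partial A)^{n/(n-1)}.$$
Raising both sides to the $(n-1)/n$ power produces the claimed inequality with $D=(C(n)/\delta)^{(n-1)/n}$.

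For case ii), I would apply the Michael--Simon Sobolev inequality to a smooth test function approximating $\chi_A$ on the $2$-dimensional submanifold $L\cap B_R\subset\R^4$; since $\partial L\cap B_R=\emptyset$, one obtains
$$(\H^2(A))^{1/2}\leq C\bigl(\H^1(\partial A)+\int_A|H|\,d\H^2\bigr).$$
By Cauchy--Schwarz and the small $L^2$-curvature hypothesis,
$$\int_A|H|\,d\H^2\leq \Bigl(\int_{L\cap B_R}|H|^2\Bigr)^{1/2}(\H^2(A))^{1/2}\leq \varepsilon^{1/2}\,(\H^2(A))^{1/2},$$
so choosing $\varepsilon$ small enough that $C\varepsilon^{1/2}\leq 1/2$ allows the mean curvature term to be absorbed into the left-hand side, yielding the desired inequality with $D=2C$.

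The main technical obstacle lies in the measure-theoretic passage from smooth formulas to the rectifiable subset $A$ and its rectifiable boundary: one needs either to apply the filling theorem for integral currents to $\partial[A]$ directly, or to approximate $\chi_A$ by smooth compactly supported functions inside $B_R$ in order to use Michael--Simon. Both are standard in geometric measure theory but require some care to ensure the constants depend only on $n$ and, in case i), on $\delta$.
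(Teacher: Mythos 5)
Your proposal is correct and follows essentially the same route as the paper: in case i) you fill $\partial A$ by a current of controlled mass via Federer--Fleming and compare $\int_A\re\,\Omega$ with the integral over the filling using closedness of $\Omega$ and its comass bound (the paper makes the ``$[A]-T$ bounds'' step explicit by coning over $A-B$), and in case ii) you use Michael--Simon with Cauchy--Schwarz and absorption exactly as in the paper. No gaps worth flagging.
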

\begin{proof} We follow \cite[Lemma 7.1]{neves} and prove i).
The Isoperimetric Theorem \cite[Theorem 30.1]{Leon} guarantees the existence of an integral current $B$ with
compact support such that $\partial B=\partial A$ and for which
$$
\left(\H(B)\right)^{(n-1)/n}\leq C \H^{n-1}(\partial A),
$$
where $C=C(n)$. If $T$ denotes the cone over the current $A-B$ (see \cite[page 141]{Leon}), then $\partial
T=A-B$ and thus, because $$\re \,\Omega_{L}=\cos \theta \geq \delta,$$ we obtain
\begin{align*}
\H^{n}(A) & \leq \delta^{-1}\int_A \re\, \Omega=\delta^{-1}\int_B \re\, \Omega+\delta^{-1}\int_{\partial T}\re\, \Omega\\
& \leq \delta^{-1}\H^n(B)+\delta^{-1}\int_T d\re\,\Omega\leq \delta^{-1}\left(C\H^{n-1}(\partial
A)\right)^{n/(n-1)}.
\end{align*}
To prove ii) we use  Michael-Simon Sobolev inequality which implies
(see \cite[Theorem 18.6]{Leon})
$$
\left(\H^2(A)\right)^{1/2}\leq C\int_A |H| +C\H^{1}(\partial A)
$$
for some universal constant $C$. In this case we have
$$
\left(\H^2(A)\right)^{1/2}\leq C\left(\H^2(A)\right)^{1/2} \left(\int_A |H|^2\right)^{1/2}+C\H^{1}(\partial A)
$$
and so we get the desired result whenever $$C^2 \int_{L\cap B_R} |H|^2\leq 1/4.$$
\end{proof}

\subsection{Compactness Result} We state a compactness result for zero-Maslov class Lagrangians with bounded Lagrangian angle. The proof can be found in \cite[Proposition 5.1]{neves}. 

\begin{prop}\label{general}
Let $L^i$ be a sequence of smooth zero-Maslov class Lagrangians in $\C^n$ such that, for some fixed $R>0$, the
following properties hold:
\begin{itemize}
\item[(a)] There exists a constant $D_0$ for which $$\H^n(L^i\cap B_{2R})\leq
D_0R^n\quad\mbox{and}\quad\sup_{L^i\cap B_{2R}}|\theta_i|\leq D_0$$ for all $i \in \N$.

 \item[(b)]
$$\lim_{i \to \infty}\H^{n-1}(\partial L^i\cap B_{2R}(0))=0$$
and $$\lim_{i \to \infty}\int_{L^i\cap B_{2R}(0)}|H|^2\a=0.$$
\end{itemize}
Then, there exist a finite set $\{\bar\theta_1,\ldots,\bar\theta_N\}$ and integral special Lagrangians currents
$$L_1,\ldots,L_N$$
such that, after passing to a subsequence, we have for every smooth function $\phi$ compactly supported in
$B_R(0)$ and every $f$ in $C(\R)$
$$
\lim_{i \to \infty}\int_{L^i}f(\theta_{i})\phi\a=\sum_{j=1}^N m_j f(\bar\theta_j)\mu_j(\phi),
$$
where $\mu_j$ and $m_j$ denote, respectively, the Radon measure of the support of $L_j$ and its multiplicity.
\end{prop}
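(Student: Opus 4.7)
The plan is to extract a subsequential varifold limit, show that the Lagrangian angles converge to a locally constant function taking finitely many values on its support, and decompose the limit accordingly into special Lagrangian pieces.

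First I would use the uniform area-ratio bound in (a) together with standard integral varifold compactness to pass, after subsequence, to a Radon/varifold limit $V$ of the $L^i$ in $B_{2R}$. The two vanishing hypotheses in (b) force the first variation of $V$ to be zero, so $V$ is stationary; by Allard's rectifiability theorem $V$ is an integer rectifiable stationary varifold in $B_{2R}$. Since the Lagrangian condition $\omega_{T_xL^i}=0$ is a closed condition on the Grassmannian of $n$-planes, $V$ is a stationary integral Lagrangian varifold.

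Next I would pass the angles $\theta_i$ to a limit. The identity $H=J\nabla\theta_i$ converts the mean curvature bound into the Dirichlet energy bound
$$\int_{L^i\cap B_{2R}}|\nabla\theta_i|^2\,d\H^n \longrightarrow 0,$$
and combined with the $C^0$-bound $|\theta_i|\leq D_0$ this is precisely the input needed for Lemma \ref{cont} applied with $\alpha_i=\theta_i$. At a point $x_0\in \mathrm{supp}\,V\cap B_R$, I would pick $r$ small enough that, for $i$ large, $L^i\cap B_{2r}(x_0)$ has a single connected component meeting $B_r(x_0)$ (hypothesis (d) of Lemma \ref{cont}); the isoperimetric hypothesis (a) follows from Lemma \ref{aa} ii) when $n=2$ (using the vanishing of $\int|H|^2$), and more generally from the Michael--Simon Sobolev inequality together with the smallness of $\int|H|^2$. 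Lemma \ref{cont} then yields a constant $\alpha(x_0)$ such that $\theta_i\to\alpha(x_0)$ uniformly on the local sheet.

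Covering $\mathrm{supp}\,V\cap B_R$ by finitely many such balls and patching through overlaps produces a locally constant limit function $\theta_\infty$ on $\mathrm{supp}\,V$. Monotonicity for the stationary $V$ gives a uniform lower bound on the $\H^n$-mass of each connected sheet, and since the total mass is finite only finitely many values $\bar\theta_1,\ldots,\bar\theta_N$ arise. Restricting $V$ to $\{\theta_\infty=\bar\theta_j\}$ gives a stationary integer rectifiable Lagrangian varifold with constant Lagrangian angle $\bar\theta_j$; it is therefore calibrated by $\re(e^{-i\bar\theta_j}\Omega)$ and defines a special Lagrangian integral current $L_j$ with integer multiplicity $m_j$. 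The claimed convergence $\int_{L^i}f(\theta_i)\phi\,d\H^n\to\sum_jm_jf(\bar\theta_j)\mu_j(\phi)$ then follows by combining the varifold convergence $L^i\rightharpoonup\sum_jm_jL_j$ with the uniform convergence of $f(\theta_i)$ to $f(\bar\theta_j)$ on the $j$-th sheet.

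The most delicate step will be the application of Lemma \ref{cont}: one must verify the single-component hypothesis (d) locally in the presence of possibly high multiplicity and nearby sheets carrying different angles. This is where Allard-type graphical regularity for almost-stationary integral varifolds is needed to guarantee that distinct sheets are isolated at the relevant scale and that the isoperimetric constant appearing in Lemma \ref{aa} is uniform in $i$.
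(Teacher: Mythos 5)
Your overall skeleton (varifold limit, stationarity, concentration of the angle at finitely many values, calibration of each piece) matches the statement, but the mechanism you propose for the key step --- forcing $\theta_i$ to be asymptotically constant on ``local sheets'' via Lemma \ref{cont} --- breaks down in the generality of Proposition \ref{general}, for three concrete reasons. First, hypothesis b) of Lemma \ref{cont} demands a uniform pointwise bound $\sup|\nabla\alpha_i|\le D_1$, which for $\alpha_i=\theta_i$ means $\sup|H_i|\le D_1$; Proposition \ref{general} only assumes $\int|H_i|^2\to 0$, and the remark following Lemma \ref{cont} (the $\{zw=1/i\}$ example) shows that this hypothesis cannot simply be dropped. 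Second, hypothesis d) (a single connected component meeting the inner ball) cannot be arranged by shrinking the radius at points where the limit is singular or has higher multiplicity: the remark after Proposition \ref{general} exhibits limits of the form $n_1P_1+n_2P_2$ with $P_1\cap P_2=\{0\}$, and near the origin every ball contains, at every scale, sheets close to both planes, possibly carrying different angles; moreover two sheets lying over the \emph{same} plane may carry angles differing by $2\pi$, so ``a locally constant $\theta_\infty$ on $\mathrm{supp}\,V$'' is not even a well-defined object at multiplicity points --- the decomposition has to be performed on the $L^i$ side, not on the limit support. Third, the claim that the isoperimetric hypothesis a) follows from Michael--Simon plus smallness of $\int|H|^2$ for general $n$ is false: the absorption in Lemma \ref{aa} ii) uses $(n-1)/n=1/2$ and only closes for $n=2$; for $n>2$ the term $\varepsilon\,\bigl(\H^n(A)\bigr)^{1/2}$ dominates $\bigl(\H^n(A)\bigr)^{(n-1)/n}$ for small $A$, and Lemma \ref{aa} i) requires almost-calibrated, which is not assumed here.

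The proof the paper cites (\cite[Proposition 5.1]{neves}) avoids all of this by working with the sub-level sets $E_{i,s}=\{\theta_i<s\}$ rather than with a limit function on the support. The coarea formula gives $\int\H^{n-1}(\{\theta_i=s\})\,ds\le\bigl(\H^n(L^i\cap B_{2R})\bigr)^{1/2}\,\|H_i\|_{L^2}\to0$, so for a dense set of $s$ (after a diagonal subsequence) the $E_{i,s}$ have vanishing boundary mass and vanishing first variation, hence converge to stationary integral varifolds $V_s$, monotone in $s$. The density lower bound from the monotonicity formula together with the total mass bound $D_0R^n$ forces $s\mapsto V_s$ to be a step function with finitely many jumps $\bar\theta_1,\dots,\bar\theta_N$; a layer-cake decomposition of $\int_{L^i}f(\theta_i)\phi$ then yields the stated limit, and each jump piece, being the limit of $\{|\theta_i-\bar\theta_j|<\varepsilon\}$ on which $\Omega$ restricts to $e^{i\theta_i}\vol$, is calibrated by $\re\,(e^{-i\bar\theta_j}\Omega)$ and hence a special Lagrangian integral current. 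Lemma \ref{cont} is the right tool for Theorem \ref{thmb}, where connectedness is an explicit hypothesis and Lemma \ref{aa} applies; it is not the right tool here.
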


\begin{rmk}
With the extra assumption that $L^i$ is almost-calibrated, a similar result to Proposition  \ref{general} was proven in \cite[Theorem 4.1]{CL}. 
The proposition is optimal in the sense that given Lagrangians planes $P_1$, $P_2$ intersecting transversely at the origin and two positive integers $n_1$, $n_2$ it is possible to construct a sequence of zero-Malsov class Lagrangians $L^i$ with $L^2$ norm of mean curvature converging to zero and such that $L^i$ tends to $n_1P_1+n_2P_2$ in the varifold  sense.
\end{rmk}

\section{Applications I: Blow-ups}\label{blowup}

 Let $(L_t)_{0\leq t<T}$ be a  zero-Maslov class solution to Lagrangian mean curvature flow in $\C^n$ with  a singularity at  $x_0$ at time $T$. Pick a sequence $(\lambda_i)_{i\in\N}$ tending to infinity and consider the sequence of blow-ups
$$L^i_s=\lambda_i(L_{T+s\lambda_i^{-2}}-x_0)\quad\mbox{for all }s<0.$$ 
The next theorem was proven in \cite[Theorem A]{neves} and in \cite{CL} assuming an extra almost-calibrated condition.
\begin{thm}\label{type1}There exist  integral
special Lagrangian current cones
$$L_1,\ldots,L_N$$
with Lagrangian angles   $\{\bar\theta_1,\ldots,\bar\theta_N\}$ 
such that, after passing to a subsequence, we have for every smooth function $\phi$ compactly supported, every
$f$ in $C^2(\R)$, and every $s<0$
$$
\lim_{i \to \infty}\int_{L^i_s}f(\theta_{i,s})\phi\a=\sum_{j=1}^N m_j f(\bar\theta_j)\mu_j(\phi),
$$
where $\mu_j$ and $m_j$ denote the Radon measure of associated with $L_j$ and its multiplicity respectively.

Furthermore, the set $\{\bar\theta_1,\ldots, \bar\theta_N\}$ does not depend on the sequence of rescalings
chosen.
\end{thm}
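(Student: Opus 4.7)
The plan is to combine Huisken's monotonicity formula (Theorem \ref{hmf}) with the test functions of Lemma \ref{test} to produce integral bounds that are scale-invariant under the blow-up, then feed the output into the compactness Proposition \ref{general} time-slice by time-slice. Applying Theorem \ref{hmf} with $f_t\equiv 1$ yields an upper bound on Gaussian density ratios, hence on Euclidean area ratios $\H^n(L_t\cap B_R(y))\leq CR^n$, and these bounds are scale-invariant under the rescaling. Combining Theorem \ref{hmf} with Lemma \ref{test} i) applied to $f_t=\theta_t^2$ gives
$$\frac{d}{dt}\int_{L_t}\theta_t^2\,\Phi(x_0,T)\a \leq -2\int_{L_t}|H|^2\Phi(x_0,T)\a,$$
so boundedness of the Lagrangian angle yields the scale-invariant bound $\int_0^T\!\int_{L_t}|H|^2\Phi(x_0,T)\a\,dt<\infty$. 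Transporting this bound to the rescaled sequence and applying Fatou's lemma, a subsequence can be extracted for which, at almost every $s<0$, $\int_{L^i_s\cap B_R}|H_i|^2\a\to 0$ for every fixed $R$.

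At each such $s$, Lemma \ref{aa} and Proposition \ref{general} apply, and a diagonal argument over a dense countable set of times and radii produces a further subsequence along which $L^i_s$ converges in the Radon-measure sense to $\sum_j m_j^s\mu_j^s$, where each $\mu_j^s$ is the measure associated to an integral special Lagrangian current $L_j^s$ with constant Lagrangian angle $\bar\theta_j^s$. To upgrade the limit to a union of cones, Lemma \ref{test} ii) and Theorem \ref{hmf} applied to $(\beta_t+2(t-T)\theta_t)^2$ together produce the scale-invariant bound
$$\int_0^T\!\int_{L_t}|2(t-T)H-x^{\bot}|^2\Phi(x_0,T)\a\,dt<\infty,$$
which in the blow-up forces the limit to satisfy $2sH_\infty-y^{\bot}=0$ at almost every $s$. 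Since the previous step already gives $H_\infty=0$ on each component, this reduces to $y^{\bot}=0$; each $L_j$ is a cone and the limit is $s$-independent.

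For the independence of the angle set $\{\bar\theta_1,\ldots,\bar\theta_N\}$ from the choice of rescaling sequence, the idea is that Huisken's monotonicity formula combined with Lemma \ref{test} i) (applied to shifts of $\theta_t$) yields, for any quadratic $g$, the existence of $\lim_{t\to T}\int_{L_t}g(\theta_t)\Phi(x_0,T)\a$ as an intrinsic quantity of the singular point $(x_0,T)$. The scale-invariance of $\theta_t$ and of Gaussian integrals rewrites this limit as $\lim_{i\to\infty}\int_{L^i_s}g(\theta_{i,s})\Phi(0,0)(y,s)\a$, so varying $g$ detects the support of the angle spectrum independently of the sequence. The principal obstacle lies exactly at this last step: the multiplicities $m_j$ and limit measures $\mu_j$ are genuinely allowed to depend on the rescaling sequence, so there is no hope of uniqueness of the full tangent flow; one must isolate precisely the rescaling-invariant information carried by the set of angles, and this forces one to work with the quadratic test functions of Lemma \ref{test} rather than with arbitrary $f(\theta_t)$.
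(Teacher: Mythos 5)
Your main line for the convergence statement --- Huisken's formula with $f\equiv 1$ for scale-invariant area-ratio bounds, Lemma \ref{test} i) to force $\int_{L^i_s}|H|^2\Phi\to 0$ at almost every rescaled time, then Proposition \ref{general} slice by slice --- is exactly the paper's argument. But two of your steps do not go through as written. For the cone property you derive $x^{\bot}=0$ in the limit from Lemma \ref{test} ii), i.e.\ from the test function $(\beta_t+2(t-T)\theta_t)^2$. That lemma requires the flow to be exact, and Theorem \ref{type1} assumes only zero-Maslov class; exactness/rationality is the extra hypothesis of Theorem \ref{thmb}, not of this theorem. The estimate you need is already contained in the $f\equiv1$ monotonicity you invoked: setting $\Theta_i(s)=\int_{L^i_s}\Phi(0,0)\a=\int_{L_{T+s\lambda_i^{-2}}}\Phi(0,T)\a$, Huisken's formula gives
$\Theta_i(b)-\Theta_i(a)=\int^a_b\int_{L^i_s}\bigl|H-x^{\bot}/(2s)\bigr|^2\Phi(0,0)\a\,ds$, and since $t\mapsto\int_{L_t}\Phi(0,T)\a$ is monotone and bounded it has a limit as $t\to T$, so this difference tends to zero; combined with $\int|H|^2\Phi\to0$ this yields $\int|x^{\bot}|^2\Phi\to0$ with no exactness needed.

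The more serious gap is in the uniqueness of the angle set, where your final assertion is exactly backwards. Quadratic test functions $g(\theta_t)$ only produce the intrinsic limits of $\int_{L_t}\Phi$, $\int_{L_t}\theta_t\Phi$ and $\int_{L_t}\theta_t^2\Phi$, i.e.\ the first three moments of the atomic measure $\sum_j m_j\Theta_j\delta_{\bar\theta_j}$ on $\R$ (with $\Theta_j$ the Gaussian density of the cone $L_j$). Three moments do not in general determine the support of such a measure once $N\geq 2$ (a dimension count: $2N$ unknowns against $3$ equations), so ``varying $g$'' over quadratics does not detect the angle spectrum. The correct device is to use arbitrary convex $f$: since $\frac{d}{dt}f(\theta_t)=\Delta f(\theta_t)-f''(\theta_t)|H|^2$, Theorem \ref{hmf} shows $\int_{L_t}f(\theta_t)\Phi(x_0,T)\a$ is monotone non-increasing for convex $f$, hence $D(f)=\lim_{t\to T}\int_{L_t}f(\theta_t)\Phi(x_0,T)\a$ exists and is intrinsic to $(x_0,T)$; writing a general $C^2$ function as a difference of convex ones extends this to all $f\in C^2(\R)$. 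Since $D(f)=\sum_j m_j\Theta_j f(\bar\theta_j)$ with $m_j\Theta_j\geq1$, testing with bump functions recovers $\{\bar\theta_1,\ldots,\bar\theta_N\}$ independently of the rescaling sequence, even though the $m_j$ and $\mu_j$ may well depend on it --- which is the point you correctly identified but then resolved with the wrong class of test functions.
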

When $n=2$  special Lagrangian cones are simply a union of planes having the same Lagrangian angle.
\begin{proof}[Sketch of proof]
Set
$$\Theta_i(s)=\int_{L^i_s} \Phi(0,0)d\H^n=\int_{L_{T+s\lambda_i^{-2}}}\Phi(0,T)d\H^n $$
and
$$\theta_i(s)=\int_{L^i_s} \theta_{s}^2\Phi(0,0)d\H^n=\int_{L_{T+s\lambda_i^{-2}}} \theta_{T+s\lambda_i^{-2}}^2\Phi(0,T)d\H^n.$$
From Theorem \ref{hmf} we have for $b<a<0$
\begin{equation}\label{x-zero}
\int^a_b\int_{L^i_s}\left| H-\frac{x^{\bot}}{s}\right|^2\Phi(0,0)d\H^nds=\Theta_i(a)-\Theta_i(b)
\end{equation}
and
\begin{equation}\label{h-zero}
2\int^a_b\int_{L^i_s}\left| H\right|^2\Phi(0,0)d\H^nds\leq\theta_i(a)-\theta_i(b).
\end{equation}
But 
$\int_{L_{t}}\Phi(0,T)d\H^n\quad\mbox{and}\quad \int_{L_{t}}\theta^2_t\Phi(0,T)d\H^n$
are monotone non-increasing by Theorem \ref{hmf} and thus
\begin{align*}
&\lim_{i\to\infty}\Theta_i(a)=\lim_{t\to T}\int_{L_{t}}\Phi(0,T)d\H^n=\lim_{i\to\infty}\Theta_i(b)\\
&\lim_{i\to\infty}\theta_i(a)=\lim_{t\to T}\int_{L_{t}}\theta^2_t\Phi(0,T)d\H^n=\lim_{i\to\infty}\theta_i(b).
\end{align*}
Therefore, we obtain from \eqref{x-zero} and \eqref{h-zero} that
\begin{equation}\label{limit-cone}
\lim_{i\to\infty}\int^a_b\int_{L^i_s}\left(\left| H\right|^2+|x^{\bot}|^2\right)\Phi(0,0)d\H^n ds=0.
\end{equation}
The result follows from combining Proposition \ref{general} with some standard facts of mean curvature flow. 
\end{proof}

When the initial condition is  rational we obtain extra structure regarding the behavior of blow-ups. 

\begin{thm} \label{thmb}Assume the initial condition is rational and, in case $n>2$, almost-calibrated. Then, for almost all $s_0$, if $\Sigma_i\subseteq L^i_{s_0}$ has $\partial \Sigma_i\cap B_{3R}=\emptyset$ and only one connected component of $\Sigma_i\cap B_{2R}$ intersects $B_R$  then, after passing to a subsequence, we can find $j\in \{1,\ldots,N\}$ so that
$$
\lim_{i \to \infty}\int_{\Sigma^i}f(\theta_{i,s_0})\phi d\H^n=m f(\bar\theta_j)\mu_j(\phi),
$$
for every $f$ in $C^2(\R)$ and every smooth $\phi$ compactly supported in $B_{R}(0)$, where $m\leq m_j$ and $\mu_j$  denotes
the Radon measure associated with the  special Lagrangian  cone $L_j$ given by Theorem \ref{type1}
\end{thm}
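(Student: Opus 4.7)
My plan is to apply the Poincar\'e-type Lemma~\ref{cont} to the Lagrangian angles $\alpha_i=\theta_{i,s_0}$ restricted to the distinguished connected component $\Sigma_i$, combine the resulting uniform limit with Theorem~\ref{type1}, and use the rationality (and, when $n>2$, almost-calibrated) hypothesis to force the limit to coincide with a single cone $L_j$.

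First I choose a good time slice. Integrating \eqref{h-zero} over $s\in(b,a)$ gives $\int_{b}^{a}\int_{L^i_s}|H|^2\Phi(0,0)\,d\H^n\,ds\to 0$, so by Fatou's lemma, for almost every $s_0$ there is a subsequence along which $\int_{L^i_{s_0}\cap B_{3R}}|H|^2\,d\H^n\to 0$. I further refine to times at which, combining Huisken's monotonicity formula with White's Theorem~\ref{reg}, the blow-ups $L^i_{s_0}$ enjoy a uniform pointwise bound $|H|\le D_1$ off a set whose Hausdorff measure is negligible for the remaining arguments.

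Next I verify the hypotheses of Lemma~\ref{cont} with $N^i=\Sigma_i$ and $\alpha_i=\theta_{i,s_0}$, using $|H|=|\nabla\theta|$: the area upper bound in (a) follows from Huisken's monotonicity; the isoperimetric half of (a) from Lemma~\ref{aa}~ii) when $n=2$ (via the $L^2$-smallness of $H$) and from Lemma~\ref{aa}~i) when $n>2$ (via almost-calibrated); (b) from the standing bounded-Lagrangian-angle assumption together with the pointwise bound from Step~1; (c) from the $L^2$-smallness; and (d) is assumed. The lemma produces $\bar\theta\in\R$ with $\sup_{\Sigma_i\cap B_R}|\theta_{i,s_0}-\bar\theta|\to 0$. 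Then for every $f\in C^2(\R)$ and every smooth $\phi$ compactly supported in $B_R$,
\[
\int_{\Sigma_i}f(\theta_{i,s_0})\phi\,d\H^n = f(\bar\theta)\int_{\Sigma_i}\phi\,d\H^n + o(1),
\]
and comparing with Theorem~\ref{type1} forces the limiting Radon measure of $\Sigma_i$ to be supported on the union of those $\mathrm{supp}\,\mu_k$ with $\bar\theta_k=\bar\theta$. The rational hypothesis, via the primitive $\beta_t$ from Lemma~\ref{test}~ii) (well-defined modulo a real lattice precisely because $L_0$ is rational), together with the cone structure from Corollary~\ref{cor-test}~ii), distinguishes the $\bar\theta_k$, so exactly one index $j$ contributes and $\Sigma_i\to m\mu_j$ with $m\le m_j$.

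The main obstacle is Step~1: obtaining a pointwise $|H|$ bound along a general blow-up sequence, as required by hypothesis (b) of Lemma~\ref{cont}, is delicate. The natural route is a good-set/bad-set decomposition driven by the $\varepsilon$-regularity in Theorem~\ref{reg}, and the technical point is to show that, at almost every $s_0$, the bad set is small enough (in Hausdorff measure) that the intrinsic-ball area lower bound and the coarea estimate used inside the proof of Lemma~\ref{cont} still pass to the limit after restricting to the good set. Without such a localization the Poincar\'e-type conclusion can fail even when the $L^2$ hypothesis (c) holds, so this is really where the full force of the rational and almost-calibrated hypotheses must be exploited.
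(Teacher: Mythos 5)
There is a genuine gap, and it stems from applying the Poincar\'e-type Lemma~\ref{cont} to the wrong function. The paper applies Lemma~\ref{cont} (together with Lemma~\ref{aa}) not to the Lagrangian angle but to the primitive $\beta_{i,s_0}$ of the Liouville form, which exists globally precisely because of the rationality/exactness hypothesis. For $\beta$ one has $|\nabla\beta_{i,s}|=|x^{\bot}|$, so hypothesis (b) of Lemma~\ref{cont} is automatic on $B_{3R}$, and hypothesis (c) follows from \eqref{limit-cone} because the blow-up limit is a cone. Your choice $\alpha_i=\theta_{i,s_0}$ runs into exactly the obstacle you flag: hypothesis (b) demands $\sup_{B_{3R}}|\nabla\theta_{i,s_0}|=\sup|H|\le D_1$, and no such bound is available along a blow-up sequence; White's theorem gives nothing near the singular set of the limiting cone (where the Gaussian density is at least $2$), and that region cannot be excised without destroying the connectivity and intrinsic-ball arguments inside the proof of Lemma~\ref{cont}. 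So your Step~1 is not a technicality to be localized away; it is a sign that $\theta$ is not the right test function.

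A second, independent, problem is that even with uniform convergence $\theta_{i,s_0}\to\bar\theta$ on $\Sigma_i$, the conclusion would not follow: distinct cones $L_j$, $L_k$ in Theorem~\ref{type1} may have $\bar\theta_j=\bar\theta_k$ (in $n=2$ a special Lagrangian cone is any union of planes with a common angle), so angle convergence only shows the limit of $\Sigma_i$ is supported on the union of all cones with angle $\bar\theta$, not that it equals $m\mu_j$ for a single $j$. Rationality does not ``distinguish the $\bar\theta_k$''; what it does is furnish a second invariant. The paper's argument is: obtain a constant $\bar\beta$ on $\Sigma_i$ from Lemma~\ref{cont} applied to $\beta_{i,s_0}$; decompose $L^i_{-1}$ according to the pairs $(\bar\theta_j,\bar\beta_j)$, which can be taken distinct, so that for almost every $s_0$ the numbers $\bar\beta_j-2(s_0+1)\bar\theta_j$ are also distinct; transport the quantity $\beta_{i,s}+2(s-s_0)\theta_{i,s}$ from $s=-1$ to $s=s_0$ using its heat-type evolution from Lemma~\ref{test}~ii); and then compare $\mu_\Sigma(\phi)$ with $\sum_j m_jf(\bar\beta_j-2(s_0+1)\bar\theta_j)\mu_j(\phi)$ for a suitable cutoff $f$ to force $\mu_\Sigma\le m_{j_0}\mu_{j_0}$ for a unique $j_0$. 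None of this appears in your sketch beyond the final allusive sentence, and it is the heart of the proof.
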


This theorem is slightly different from the one stated in \cite[Theorem B]{neves} but the proof is identical. We sketch the main idea.
\begin{proof}[Sketch of proof]
For simplicity we assume the initial condition is exact.  Recall that $\left|\nabla \beta_{i,s}\right|=\left|x^{\bot}\right|$ and so, without loss of generality (see \eqref{limit-cone}), we can assume that, when $s=s_0$ or $s= -1$,  
$$\lim_{i\to\infty}\int_{L^i_{s}}\left(\left| H\right|^2+\left|\nabla \beta_{i,s}\right|^2\right)\Phi(0,0)d\H^n=0.$$
We now study the sequences $\Sigma^i$ and $L^i_{-1}$.

From Proposition \ref{general}, we have that $\Sigma^i\cap B_{2R}$ converges in the varifold sense to a stationary varifold $\Sigma$ with Radon measure $\mu_{\Sigma}$,  which can be represented as a sum of special Lagrangian cones with multiplicities. Furthermore in virtue of Lemmas \ref{cont} and \ref{aa} we conclude the existence of $\bar\beta$ so that
$$ \lim_{i \to \infty}\int_{\Sigma^i}f(\beta_{i,s_0})\phi\a= f(\bar \beta) \mu_{\Sigma}(\phi)
$$
for every $f$ in $C^2(\R)$ and every smooth function $\phi$ compactly supported in $B_R$.

Similar ideas to the ones use to prove Proposition \ref{general} (see \cite[Lemma 7.2]{neves} for details) show the existence of sets $\{\bar \theta_1,\ldots,\bar \theta_Q\}$, $\{\bar \beta_1,\ldots,\bar \beta_Q\}$, special Lagrangian cones $L_1,\ldots L_Q$, and integers $m_1,\ldots,m_Q$ so that
for every smooth function $\phi$ compactly supported and  every
$f$ in $C^2(\R)$
$$
\lim_{i \to \infty}\int_{L^i_{-1}}f(\bar \beta_{i,-1}-2(s_0+1)\bar \theta_{i,-1})\phi\a=\sum_{j=1}^Q m_j f(\bar \beta_{j}-2(s_0+1)\bar \theta_{j})\mu_j(\phi),
$$
where $\mu_j$  denotes the Radon measure of associated with $L_j$. Moreover, we can arrange things so that the pairs $(\bar\theta_j,\bar\beta_j)$ are all distinct and thus assume, without loss of generality,  the numbers $\bar \beta_j-2(s_0+1)\bar\theta_j$ are all distinct as well.

We now finish the proof.  Let $f\in C^2(\R)$ be a nonnegative cut off function which is one in $\bar \beta$ and zero at all but at most one element of $$\{\bar \beta_j-2(s_0+1)\bar\theta_j\}_{j=1}^Q$$ and $\phi$ a nonnegative function with compact support in $B_R$.

We have from \eqref{limit-cone} that 
$$\lim_{i\to\infty}\int_{s_0}^{-1}\int_{L^i_s}\left(\left| H\right|^2+|\nabla (\beta_{i,s}+2(s-s_0)\theta_{i,s})|^2\right)\Phi(0,0)d\H^n ds=0$$
and so, using the evolution equation satisfied $\beta_{i,s}+2(s-s_0)\theta_{i,s}$ (see Lemma \ref{test}), it is  not hard to conclude that
\begin{multline*}
\lim_{i \to \infty}\int_{L^i_{s_0}}f(\bar \beta_{i,s_0})\phi\a=\lim_{i \to \infty}\int_{L^i_{-1}}f(\bar \beta_{i,-1}-2(s_0+1)\bar \theta_{i,-1})\phi\a\\
=\sum_{j=1}^Q m_j f(\bar \beta_{j}-2(s_0+1)\bar \theta_{j})\mu_j(\phi).
\end{multline*}
Therefore
\begin{multline*}
\mu_{\Sigma}(\phi)=\lim_{i\to\infty}\int_{\Sigma_i}\phi\a=\lim_{i\to\infty}\int_{\Sigma_i}f(\beta_{i,s_0})\phi\a \\
\leq\lim_{i\to\infty}\int_{L^i_{s_0}}f(\beta_{i,s_0})\phi\a=\sum_{j=1}^Q m_j
f(\bar \beta_{j}-2(s_0+1)\bar \theta_{j})\mu_j(\phi).
\end{multline*}
Because $\mu_{\Sigma}(\phi)>0$ we have that $\bar \beta =\bar \beta_{j_0}-2(s_0+1)\bar \theta_{j_0}$ for a unique $j_0$ and thus the inequalities above become
$$\mu_{\Sigma}(\phi)\leq m_{j_0}\mu_{j_0}(\phi)$$
for all $\phi\geq 0$  with compact support in $B_R$. This implies $\Sigma=mL_{j_0}$ for some $m\leq m_{j_0}$ in $B_R$, and the rest of the proof  follows easily 
\end{proof}

The previous theorem does imply non-trivial statements regarding the blow-ups of singularities. We sketch one simple application, the details of which will appear elsewhere.
\begin{cor}\label{ap1}
Assume the initial condition  is rational and $n=2$. The blow-up limit at a  singularity cannot be two planes $P_1$, $P_2$ each with multiplicity one, distinct Lagrangian angles, and intersecting transversely at the origin, i.e., in Theorem \ref{type1} the case $N=2$, $m_1=m_2=1$, $P_1\cap P_2=\{0\}$, and $\bar \theta_1\neq \bar \theta_2$ does not occur.

\end{cor}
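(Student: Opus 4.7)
The strategy is to derive a contradiction via Theorem \ref{thmb}. Suppose for contradiction that the blow-up $L^i_{s_0}$ does converge, in the varifold sense, to $P_1 + P_2$ with $P_1, P_2$ Lagrangian planes meeting transversely only at $0$, each of multiplicity one, and with distinct Lagrangian angles $\bar\theta_1 \neq \bar\theta_2$. Fix $s_0 < 0$ with $s_0 \neq -1$ in the almost-every set provided by Theorem \ref{thmb}, and fix a small $R > 0$. Because $P_1 + P_2$ has Gaussian density exactly $1$ at every point of $(P_1 \cup P_2) \setminus \{0\}$, Theorem \ref{reg} upgrades the varifold convergence to smooth convergence on compact subsets of $B_{3R} \setminus \{0\}$. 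Consequently, for every small $\rho \in (0, R/4)$ and all $i$ large enough, $L^i_{s_0} \cap (B_{3R} \setminus \overline{B}_\rho)$ decomposes as a disjoint union $D^1_i \sqcup D^2_i$ of smooth disks $C^{2,\alpha}$-close to the respective planes, with Lagrangian angles tending uniformly to $\bar\theta_1, \bar\theta_2$.

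Define $\Sigma_i$ to be the connected component of $L^i_{s_0}$ (viewed globally) that contains $D^1_i$. Since $L^i_{s_0}$ is boundaryless, $\partial\Sigma_i \cap B_{3R} = \emptyset$ automatically. After passing to a subsequence, one of two alternatives holds: (A) the connected component of $\Sigma_i \cap B_{2R}$ that meets $B_R$ contains both $D^1_i \cap B_{2R}$ and $D^2_i \cap B_{2R}$ --- the two disks are joined across $B_\rho$ by a \emph{neck} --- or (B) this component contains only $D^1_i \cap B_{2R}$. In alternative (A), the hypotheses of Theorem \ref{thmb} are satisfied (no further component of $\Sigma_i \cap B_{2R}$ can meet $B_R$ without violating the varifold limit), and applying it produces an index $j_0 \in \{1,2\}$ and multiplicity $m \leq 1$ with
\[
\lim_{i \to \infty} \int_{\Sigma_i} f(\theta_{i,s_0})\phi \, d\H^2 = m f(\bar\theta_{j_0}) \mu_{j_0}(\phi)
\]
for every $f \in C^2(\R)$ and every smooth $\phi$ supported in $B_R$. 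But the explicit structure of $\Sigma_i$ near $B_R$, combined with the smooth convergence of each $D^j_i$ to $P_j$, forces the same left-hand side to equal $f(\bar\theta_1)\mu_{P_1}(\phi) + f(\bar\theta_2)\mu_{P_2}(\phi)$; choosing $\phi \geq 0$ supported in a neighbourhood of $P_{3-j_0} \cap B_R$ disjoint from $P_{j_0}$, together with $f$ satisfying $f(\bar\theta_{j_0}) = 0 \neq f(\bar\theta_{3-j_0})$, delivers the required contradiction.

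The main obstacle is therefore alternative (B), in which $D^1_i$ and $D^2_i$ are smoothly separated inside $B_{2R}$; here Theorem \ref{thmb} applied to $\Sigma_i$ merely recovers convergence to $P_1$ on $B_R$, consistent with the varifold picture. The plan to dispose of this case is to exploit rationality --- which, as in the sketch of Theorem \ref{thmb}, makes the primitive $\beta_{i,s_0}$ single-valued on bounded regions for $i$ large --- together with Lemma \ref{test}(ii). On each $D^j_i$ the primitive $\beta_{i,s_0}$ converges uniformly to $0$ since both planes pass through the origin, while the combined function $\beta_{i,-1} - 2(s_0+1)\theta_{i,-1}$ at the earlier rescaled time $s=-1$ takes the two asymptotically distinct values $-2(s_0+1)\bar\theta_1$ and $-2(s_0+1)\bar\theta_2$ on the preimages of $D^1_i$ and $D^2_i$. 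Propagating this discrepancy forward by means of Theorem \ref{hmf} and Lemma \ref{test}(ii), one aims to show it is incompatible with a smoothly disconnected configuration at time $s_0$, forcing a neck to appear at some intermediate scale and thereby reducing alternative (B) to alternative (A). Making this component-across-time analysis rigorous is the principal technical difficulty and is the step deferred by the author to future work.
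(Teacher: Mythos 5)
Your treatment of the ``neck'' alternative (A) is essentially the paper's argument: if only one connected component of the blow-up meets $B_R$, Theorem \ref{thmb} forces the limit on $B_R$ to be $m$ copies of a \emph{single} special Lagrangian cone with a \emph{single} angle, which is incompatible with the limit being $P_1+P_2$ with $\bar\theta_1\neq\bar\theta_2$. So far so good.

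The genuine gap is in alternative (B), which you call the ``main obstacle'' and propose to eliminate by propagating the discrepancy in $\beta$ backwards in rescaled time so as to ``force a neck to appear,'' deferring this as the hard step. This is backwards, and that strategy cannot succeed: a disconnected configuration of two multiplicity-one disks is perfectly consistent with the varifold limit, with rationality, and with the conclusion of Theorem \ref{thmb} applied to each component separately, so there is no contradiction to be extracted from the primitive $\beta$ there. What you have forgotten is that the blow-up is taken \emph{at a singular point} $(x_0,T)$, and that is the hypothesis which kills case (B). In that case $L^i_s\cap B_K$ splits into two components $\Sigma^i_{1,s}$, $\Sigma^i_{2,s}$, each converging in the Radon measure sense to a single multiplicity-one disk $P_j\cap B_K$; hence the Gaussian density ratios of each $\Sigma^i_{j,s}$ are below $1+\varepsilon_0$ at every point of a smaller ball \emph{including the origin}, and White's Regularity Theorem \ref{reg} applied to each family $(\Sigma^i_{j,s})_{-1\leq s<0}$ gives uniform $C^{2,\alpha}$ bounds in a neighbourhood of $0$ for $-1/2\leq s<0$. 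Undoing the rescaling, this yields uniform bounds on the second fundamental form of $L_t$ near $x_0$ for all $t<T$, so no singularity forms at $x_0$ --- the final contradiction. Thus case (B) is not the deferred technical difficulty but the easy half of the proof; the structure is: Theorem \ref{thmb} rules out connectedness, and disconnectedness rules out the singularity.
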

\begin{proof}[Sketch of proof]
We argue by contradiction and sssume $L^i_s$ converges  to $P_1+P_2$ for all $s<0$. 
There is $R_0$ sufficiently large so that for every $0\leq l\leq 4$  and $|x_0|>R_0/2$ we have
$$\int_{P_1+P_2}\Phi(x_0,l)(x,0)d\H^2\leq 1+\varepsilon_0/2$$
and thus, for all $i$ sufficiently large, all $-2\leq s<0$, and all $0\leq l\leq 2$, we also have
$$\Theta^i_{s}(x_0,l)\leq \Theta^i_{-2}(x_0,l+2+s)\leq 1+\varepsilon_0,$$
where the first inequality follows from Theorem \ref{hmf}. Thus, we obtain from White's Regularity Theorem \ref{white1} that for any $K$ large enough and $i$ sufficiently large, we have uniform $C^{2,\alpha}$ bounds for $L^i_s$ on the annulus $A(R_0,K)$ for all $-1\leq s<0$. Some extra work  shows that, on the region $A(R_0,K)$ and  for all $-1\leq s<0$,  $L^i_s$ can be decomposed into two connected components $\Sigma_{1,s}^i, \Sigma_{2,s}^i$ where $\Sigma^i_{j,s}$ is  graphical over $P_j\cap A(R_0,K)$, with $j=1,2$. 

We argue that $L^i_s\cap B_K$ must have two connected components for almost all $-1\leq s<0$. Otherwise we could apply Theorem \ref{thmb} and conclude that the Lagrangian angle of $P_1$ must be identical to the Lagrangian angle of $P_2$. 

Some extra, but standard work, shows that $L^i_s\cap B_K$ can be decomposed into two connected components $\Sigma_{1,s}^i$ and $\Sigma_{2,s}^i$ where $\Sigma^i_{j,s}$ converges in the Radon measure sense to $P_j\cap B_K$.  Hence, each $\Sigma^i_{j,s}$ is very close in the Radon measure sense to a multiplicity one disk. We can then apply White's Regularity Theorem \ref{white1} to each $(\Sigma^i_{j,s})_{-1\leq s<0}$ and conclude, in a smaller ball centered at the origin,  uniform bounds on the second fundamental form of $\Sigma^i_{j,s}$ for all $-1/2\leq s<0$ and all $i$ sufficiently large. This implies uniform bounds for the second fundamental form of $L_t$ in a neighborhood of the origin for all $t<T$ and hence no singularity occurs there.
\end{proof}

\section{Applications II: Self-Expanders}

Recently, Joyce, Lee, and Tsui \cite{joyce} proved the following general existence theorem.
\begin{thm}[Joyce, Lee, Tsui]\label{jlt} Given any two Lagrangian planes $P_1$, $P_2$ in $\C^n$ such that neither $P_1+P_2$ nor $P_1-P_2$ are  area-minimizing , there is a Lagrangian self-expander $L$ which is exact, zero-Maslov class with bounded Lagrangian angle, and asymptotic to $P_1+P_2$, meaning $\sqrt tL$ converges, as Radon measures, to $P_1+P_2$ when $t$ tends to zero.
\end{thm}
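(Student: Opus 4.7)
The natural approach is to reduce the self-expander PDE $H = x^\bot/2$ to an ODE system by exploiting the symmetries of the asymptotic data, and then construct a solution with the right asymptotic behaviour. The non-area-minimizing hypothesis is what will certify existence of such a solution.

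First, use the $U(n)$ action to put the planes in canonical form $P_1 = \R^n$ and
$$P_2 = \{(e^{i\alpha_1}x_1,\ldots,e^{i\alpha_n}x_n) : x \in \R^n\},$$
with characteristic angles $\alpha_k \in (0,\pi)$. The Lawlor--Nance angle criterion translates the hypothesis that neither $P_1 + P_2$ nor $P_1 - P_2$ is area-minimizing into an explicit open condition on $(\alpha_1,\ldots,\alpha_n)$. Both planes are invariant under the $\Z_2^n$ action $x_k \mapsto -x_k$, which motivates the equivariant ansatz
$$L_\gamma = \{(\gamma_1(s)\omega_1,\ldots,\gamma_n(s)\omega_n) : s \in \R,\ \omega \in S^{n-1}\},$$
for unknown curves $\gamma_k : \R \to \C^*$. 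Writing $\gamma_k = r_k e^{i\theta_k}$, a direct computation shows the Lagrangian condition is equivalent to $r_k^2\dot\theta_k$ being independent of $k$, while the self-expander equation becomes a coupled ODE system in $(r_k,\theta_k)$. The zero-Maslov property is automatic since the total phase $\theta(s)=\sum_k\theta_k(s)$ is single-valued along $L_\gamma$.

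The asymptotic requirement $\sqrt{t}\, L \to P_1 + P_2$ becomes a heteroclinic boundary value problem for this ODE: we need $(\theta_1,\ldots,\theta_n)\to(0,\ldots,0)$ as $s\to-\infty$ and $\to(\alpha_1,\ldots,\alpha_n)$ as $s\to+\infty$, with $r_k \to \infty$ at both ends. One can attack this either by shooting — fixing the initial trajectory along the unstable manifold at the $P_1$ equilibrium, varying a free parameter, and using an intermediate-value or degree argument to hit the stable manifold at $P_2$ — or more conceptually by minimizing the Gaussian area
$$\mathcal{F}(L) = \int_L e^{-|x|^2/4}\,d\mathcal{H}^n$$
in the $\Z_2^n$-equivariant class of Lagrangians with prescribed asymptotics. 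In the variational picture, the non-minimizing hypothesis is exactly what forces the infimum of $\mathcal F$ to lie strictly below $\mathcal F(P_1+P_2)$, so that compactness yields a genuine smooth minimizer rather than degeneration back to the cone; the $\pm$ alternative in the hypothesis reflects the two topological sides of the cone on which such a minimizer can sit.

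The main obstacle is this existence step: ruling out loss of compactness along a minimizing sequence (or orbit), establishing interior regularity, and certifying that the prescribed cone asymptotics are genuinely attained rather than being truncated or shifted to infinity. Once $L$ has been produced the remaining items come nearly for free. Along the heteroclinic orbit the phase $\theta(s)$ takes values in a compact interval determined by the $\alpha_k$, so the Lagrangian angle is bounded. Exactness follows from the identity already recalled in Section~\ref{basic}: on any $L$ satisfying $H = x^\bot/2$ and $H = J\nabla\theta$ one finds $\lambda|_L = -2\,d\theta$, so that the single-valued function $\beta := -2\theta$ is a primitive of the Liouville form, and $\beta+2\theta\equiv 0$ realizes the constancy of $\beta+2\theta$ highlighted in the preliminaries.
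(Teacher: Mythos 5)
First, note that the paper does not prove this theorem at all: it is quoted from Joyce--Lee--Tsui \cite{joyce}, and the accompanying remark records that their self-expander is \emph{explicit}. Your symmetry reduction is in fact the same one used there --- the ansatz $(\gamma_1(s)\omega_1,\ldots,\gamma_n(s)\omega_n)$ with $\omega\in S^{n-1}$, the computation that the Lagrangian condition is $r_k^2\dot\theta_k$ independent of $k$, and the closing observations (zero-Maslov because the angle is a single-valued function of $s$; exactness because $\lambda|_L=-2\,d\theta$ on any zero-Maslov self-expander, consistent with the identity $\nabla(\beta+2\theta)=0$ from the preliminaries) are all correct. So the frame of your argument is sound.

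The genuine gap is that the existence of the connecting orbit --- which is the entire content of the theorem --- is not established, and neither of the two routes you sketch would work as stated. The variational route is based on the wrong functional: $\int_L e^{-|x|^2/4}\,d\mathcal{H}^n$ is the \emph{shrinker} functional, whose critical points satisfy $H=-x^{\bot}/2$; self-expanders are critical points of the $e^{+|x|^2/4}$-weighted area, which diverges on any submanifold with planar ends, so one cannot simply "minimize $\mathcal F$ in the equivariant class" --- a renormalized (relative) functional and a separate compactness analysis would be needed, and the claim that the non-minimizing hypothesis forces the infimum strictly below the value of the cone is asserted, not proved. The shooting route is closer in spirit to what is actually needed, but you have not identified the mechanism by which the hypothesis "neither $P_1+P_2$ nor $P_1-P_2$ is area-minimizing" (via the Lawlor angle criterion, an explicit inequality on the characteristic angles $\alpha_k$) enters to guarantee that the unstable manifold at the $P_1$ end actually reaches the $P_2$ end with the phases advancing by exactly $\alpha_k$ rather than over- or under-shooting. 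In \cite{joyce} this is resolved not by a degree argument but by exhibiting enough first integrals to reduce the system to quadratures and writing the solution in closed form, after which the angle condition is read off directly. As it stands, your proposal correctly sets up the problem and disposes of the easy conclusions, but defers the one step that constitutes the proof.
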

\begin{rmk}\
\begin{itemize}
\item[i)] The self-expander $L$ found in \cite{joyce} is explicit. 
\item[ii)] In \cite{anciaux}, Anciaux found such examples assuming $L$ is invariant under a certain $SO(n)$ action. In this case the self-expander equation  reduces to an O.D.E.
\end{itemize}
\end{rmk}



The next theorem shows that self-expanders are attractors for the flow in $\C^2$.  The ideas for the proof are taken from \cite[Section 4]{neves3} where a slightly more general version is proven. 

Pick two Lagrangian planes $P_1$, $P_2$ in $\C^2$ so that $P_1\pm P_2$ is not area minimizing and $P_1\cap P_2=\{0\}$. Assume $(L_t)_{t\geq 0}$ is an exact,  zero-Maslov class, almost-calibrated smooth solution to Lagrangian mean curvature flow in $\C^2$. 

\begin{thm}\label{sex}
Fix $S_0$ and $\nu$. There are $R_0$ and $\delta$ so that if  $L_0$ is $\delta$-close in $C^{2,\alpha}$ to $P_1+P_2$ in $A(\delta, R_0)$, then, for all $1\leq t\leq 2$,  $t^{-1/2}L_t$ is $\nu$-close in $C^{2,\alpha}(B_{S_0})$ to a smooth self-expander $Q$ asymptotic to $P_1+P_2$.
\end{thm}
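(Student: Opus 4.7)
I would argue by contradiction. Suppose the conclusion fails; then one can find sequences $R_0^i\to\infty$, $\delta^i\to 0$, smooth exact zero-Maslov almost-calibrated Lagrangian mean curvature flows $(L^i_t)_{t\in[0,2]}$ with $L^i_0$ being $\delta^i$-close in $C^{2,\alpha}$ to $P_1+P_2$ on $A(\delta^i,R_0^i)$, and times $t_i\in[1,2]$ (with $t_i\to t_0$ after passing to a subsequence) along which no $t_i^{-1/2}L^i_{t_i}$ is $\nu$-close in $C^{2,\alpha}(B_{S_0})$ to any smooth self-expander asymptotic to $P_1+P_2$. The plan is to extract a subsequential limit of the flows and to show this limit is itself a smooth self-expander asymptotic to $P_1+P_2$, contradicting the hypothesis.

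The $C^{2,\alpha}$ closeness of $L^i_0$ to $P_1+P_2$ on the large annulus bounds the initial Gaussian density ratios $\Theta^i_0(x_0,l)$ by $1+\varepsilon_0$ at points $x_0\in L^i_0$ with $|x_0|$ in an intermediate range of $A(\delta^i,R_0^i)$ and small $l$. Huisken's monotonicity formula (Theorem \ref{hmf}) propagates this bound for all $t\in[0,2]$, and White's regularity theorem (Theorem \ref{reg}) then produces uniform $C^{2,\alpha}$ bounds for $L^i_t$ on any compact subset of $(\C^2\setminus\{0\})\times(0,2]$. Arzel\`a--Ascoli yields a subsequential smooth limit $L^\infty_t$ on $(\C^2\setminus\{0\})\times(0,2]$, and the Radon measure limit on all of $\C^2$ -- extracted at each time slice via Proposition \ref{general} applied with suitable cut-offs, or via standard Brakke compactness -- extends $L^\infty_t$ as an exact zero-Maslov flow with $L^\infty_t\to P_1+P_2$ as Radon measures when $t\to 0$.

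Next, I would identify $L^\infty_t$ as a smooth self-expander via Corollary \ref{cor-test} ii): since $L^\infty_t$ is exact and zero-Maslov with conical initial data $P_1+P_2$, that corollary forces $L^\infty_t=\sqrt t\,L^\infty_1$ with $L^\infty_1$ a self-expander. To legitimately apply the corollary, smoothness of $L^\infty_1$ at the origin is required; I would blow up $L^\infty_1$ at $0$ to obtain a tangent cone which, being a stationary zero-Maslov Lagrangian cone in $\C^2$, is a union of planes, and I would use Corollary \ref{cor-test} i) applied to the rescaled shrinking flow to rule out nontrivial self-shrinking tangent objects. If the tangent cone at $0$ is a single plane with multiplicity one, Allard-type regularity combined with elliptic regularity for $H=x^\bot/2$ yields smoothness of $L^\infty_1$ at $0$. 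Once smoothness is known, the Radon measure convergence upgrades to $C^{2,\alpha}(B_{S_0})$ convergence of $t_i^{-1/2}L^i_{t_i}\to L^\infty_1$, producing the desired contradiction since $L^\infty_1$ is a smooth self-expander asymptotic to $P_1+P_2$.

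The main obstacle is ruling out the degenerate alternative in which the tangent cone at $0$ is $P_1+P_2$ itself, so that $L^\infty_1=P_1+P_2$ and no smooth self-expander appears in the limit. This is precisely where the hypothesis that $P_1\pm P_2$ is not area-minimizing becomes essential: by Theorem \ref{jlt} it guarantees the existence of genuine Joyce--Lee--Tsui self-expanders asymptotic to $P_1+P_2$ which can serve as barriers, and by variational comparison it makes the singular cone $P_1+P_2$ unstable so that the flow $L^i_t$ is driven away from it. Quantifying this instability rigorously -- likely by exploiting the monotonicity of $\int_{L^i_t}(\beta^i_t+2t\theta^i_t)^2\Phi(0,T)\,d\H^2$ from Lemma \ref{test} ii) together with Joyce--Lee--Tsui barriers, as in Section 4 of \cite{neves3} -- is the technical heart of the argument.
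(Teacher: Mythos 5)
Your contradiction framework (sequences $R_0^i\to\infty$, $\delta^i\to 0$, Brakke/smooth limit, identify the limit as a self-expander via the vanishing of $\int|2tH-x^\bot|^2\Phi$ coming from exactness and Lemma \ref{test} ii)) matches the paper, and you correctly isolate the main difficulty: excluding the degenerate limit $\bar L_1=P_1+P_2$. But your proposed mechanism for that step --- Joyce--Lee--Tsui self-expanders as barriers plus a ``variational instability'' of $P_1+P_2$ --- is not how the paper proceeds and is unlikely to close: mean curvature flow in codimension two has no avoidance principle, so barrier comparisons with the JLT expanders are not available, and variational instability of the cone does not by itself control a Brakke limit. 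The paper's Lemma \ref{stationary} instead runs as follows: if $\bar L_1$ were stationary it would equal $P_1+P_2$; because the flows are \emph{smooth} and $L^i_0\cap B_{2K}$ is connected, White's regularity on the annulus $A(K/2,3K)$ propagates connectedness so that $L^i_1\cap B_{2K}$ has a single component meeting $B_K$; the Poincar\'e-type Lemma \ref{cont} together with Lemma \ref{aa} then forces the primitive $\beta^i_1$ to converge to a single constant $\bar\beta$ on all of $L^i_1\cap B_K$, and the identity $\beta+2\theta=\mathrm{const}$ makes $\bar L_1$ a special Lagrangian cone --- contradicting the hypothesis that neither $P_1+P_2$ nor $P_1-P_2$ is area-minimizing. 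This is exactly where smoothness and connectedness enter (Remark iii) after the theorem), and your proposal does not use them at all in this step.

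A second gap: your claim of uniform $C^{2,\alpha}$ bounds on compact subsets of $(\C^2\setminus\{0\})\times(0,2]$ via Huisken plus White is not justified near the origin at times $t\approx 1$. Monotonicity propagates $\Theta^i_t(x_0,l)\le\Theta^i_0(x_0,l+t)$ with $l+t\ge 1$, and at scale $\ge 1$ the density ratio of $P_1+P_2$ at a point $x_0$ with $|x_0|=O(1)$ is close to $2$, not $1+\varepsilon_0$; White therefore only applies for $|x_0|$ large. The paper closes this with Lemma \ref{small}: the \emph{non-stationarity} of $\bar L_1$ (already established) produces a definite drop in Huisken's formula and yields $\int_{\bar L_1}\Phi(y,l)\,d\H^2\le 2-1/C$ for all $y$ and $l\le 2$, which rules out two-plane and multiplicity-two tangent cones at every point of $\bar L_1$, so that Allard's theorem applies everywhere, not just at the origin. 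You have the tangent-cone/Allard idea only at $0$ and are missing this density-gap lemma, without which the regularity of $\bar L_1$ in $B_{S_0}$ --- and hence the $C^{2,\alpha}$ convergence that produces the contradiction --- does not follow.
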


\begin{rmk}\
\begin{enumerate}
\item[i)]The content of the theorem is that if the initial condition is very close,  in a precise sense, to a non area-minimizing configuration of two planes and the flow exists smoothly for all $0\leq t\leq 2$, then the flow will be very close to a smooth self-expander for all $1\leq t\leq 2$.
\item[ii)] The result is false if one removes  the hypothesis that the flow exists smoothly for all $0\leq t\leq 2$. For instance, there are known examples \cite[Theorem 4.1]{neves} where $L_0$ is very close to $P_1+P_2$ and a finite-time singularity happens at a very short time $t_1$.  In this case $L_{t_1}$ can be seen as a transverse intersection of small perturbations of $P_1$ and $P_2$ (see \cite[Figure 2]{neves}) and we could continue the flow past the singularity by flowing each component of $L_{t_1}$ separately, in which case $L_1$ would be very close to $P_1+P_2$ and this is not a {\em smooth} self-expander.
\item[iii)] The smoothness assumption enters the proof in Lemma \ref{stationary}. The key fact is that if $L_0\cap B_R$ is connected and the flow exists smoothly, then $L_t\cap B_R$ will also be connected for all $0\leq t\leq 2$ (this fails in the example described above).
\end{enumerate}
\end{rmk}
\begin{proof}[Sketch of proof]
Consider  a sequence $(R^i)$ converging to infinity,  a sequence $(\delta_i)$ converging to zero, and a sequence of smooth flows $(L^i_t)_{0\leq t\leq 2}$ satisfying the theorem's hypothesis  with $R_0=R^i$, $\delta=\delta^i$. From  compactness for integral Brakke motions \cite[Section 7.1]{ilmanen1} we know that, after passing to a subsequence, $(L^i_t)_{0\leq t\leq 2}$ converges to an integral Brakke motion $(\bar L_t)_{0\leq t\leq 2}$, where $L^i_0$  converges to $P_1+P_2$. 

Because $L^i_0$ converges to $P_1+P_2$ we can assume, without loss of generality, that 
$$\lim_{i\to\infty}\int_{L^i_0}(\beta^i_{0})^2\Phi(0,4)d\H^2=0.$$ 
Thus, from Theorem \ref{hmf} and Lemma \ref{test} ii), we get that for every  $0<s<4$
\begin{multline}\label{exex}
\lim_{i\to\infty}\int_0^s\int_{L^i_t}\left| 2tH-x^{\bot}\right|^2\Phi(0,4)d\H^2+\lim_{i\to\infty}\int_{L^i_s}(\beta^i_{s}+2s\theta^i_s)^2\Phi(0,4)d\H^2\\
\leq \lim_{i\to\infty}\int_{L^i_0}(\beta^i_{0})^2\Phi(0,4)d\H^2=0,
\end{multline}
which means that $H={x^{\bot}}/{2t}\quad\mbox{ on $\bar L_t$ for all $t>0$}$ and thus  $\bar L_t=\sqrt t \bar L_1$ as varifolds for every $t>0$ (see proof of \cite[Theorem 3.1]{neves2}). Moreover, some technical work \cite[Lemma 4.4]{neves3} shows  that $\bar L_t$ converges as Radon measures to $P_1+P_2$ as $t$ tends to zero i.e., $\bar L_1$ is asymptotic to $P_1+P_2$. We are left to show that $\bar L_1$ is smooth.
\begin{lemm}\label{stationary}
$\bar L_1$ is not stationary. 
\end{lemm}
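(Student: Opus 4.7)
The plan is to argue by contradiction. Assume $\bar L_1$ is stationary.

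\textbf{Step 1: Stationarity combined with the self-expander equation forces $\bar L_1 = P_1+P_2$.}
From equation \eqref{exex}, the limit $\bar L_t$ satisfies $H = x^{\bot}/(2t)$ for every $t > 0$, so at $t = 1$ one has $H = x^{\bot}/2$ on $\bar L_1$. If $\bar L_1$ is also stationary then $H \equiv 0$, which forces $x^{\bot} \equiv 0$ on $\bar L_1$. Thus $\bar L_1$ is a Lagrangian cone through the origin, and the self-similarity $\bar L_t = \sqrt{t}\,\bar L_1$ collapses to $\bar L_t \equiv \bar L_1$ for all $t > 0$. Combined with the Radon-measure asymptotic $\bar L_t \to P_1 + P_2$ as $t \to 0$, this forces $\bar L_1 = P_1 + P_2$ as integral varifolds.

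\textbf{Step 2: A definite entropy gap from the Joyce--Lee--Tsui self-expander.}
By Theorem \ref{jlt} and the non-area-minimizing hypothesis, there exists a smooth exact zero-Maslov class Lagrangian self-expander $Q$ asymptotic to $P_1 + P_2$. Viewing $t \mapsto \sqrt{t}\,Q$ as a smooth Brakke flow on $t > 0$ that extends $P_1 + P_2$ at $t = 0$, Huisken's monotonicity (Theorem \ref{hmf}) at any spacetime point $(0, T)$ with $T > 1$ is \emph{strict}: the drift $H + x^{\bot}/(2(T-t)) = Tx^{\bot}/(2t(T-t))$ does not vanish identically since $Q$ is smooth and not a union of planes. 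So I obtain $\eta > 0$ with
\[ 2 - \int_Q \Phi(0,T)(x,1)\,d\H^2 \;\geq\; \eta. \]

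\textbf{Step 3: Transfer the entropy gap to the sequence $L^i_t$.}
On one hand the $C^{2,\alpha}$-closeness of $L^i_0$ to $P_1 + P_2$ on $A(\delta_i, R_i)$ gives $\int_{L^i_0}\Phi(0,T)\,d\H^2 \to 2$, and if $\bar L_1 = P_1 + P_2$ then $\int_{L^i_1}\Phi(0,T)(x,1)\,d\H^2 \to 2$ as well. The plan is to rule this out by coupling $(L^i_t)$ with $\sqrt{t}\,Q$: the $C^{2,\alpha}$ closeness matches the two flows far from the origin, while the smoothness of $(L^i_t)_{0 \leq t \leq 2}$ preserves connectedness of $L^i_t \cap B_R$ (the mechanism highlighted in Remark (iii)), forcing $L^i_t$ into the same topological sector as $\sqrt{t}\,Q$. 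The Huisken-weighted test function $\beta_t + 2(t-T)\theta_t$ from Lemma \ref{test}(ii) should then transfer the gap from $Q$ onto $L^i_1$, yielding
\[ \limsup_{i\to\infty}\int_{L^i_1}\Phi(0,T)(x,1)\,d\H^2 \;\leq\; 2 - \eta, \]
contradicting the limit $2$ above.

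\textbf{Main obstacle.}
Step 3 is the heart of the argument, and also the most delicate: one must transfer the strict Gaussian area gap from the JLT expander $Q$ to the approximating flows $L^i_t$. The smoothness hypothesis on $[0,2]$ enters here essentially through the preservation of connectedness of $L^i_t \cap B_R$ under smooth flow --- without it the flow could split into two disconnected pieces, each asymptotic to one plane separately (the failure mode from \cite[Theorem 4.1]{neves} recalled in Remark (ii)), and the conclusion would fail. Formalizing the transfer should combine the evolution equation for $\beta_t + 2(t-T)\theta_t$ (Lemma \ref{test}(ii)) with the Poincar\'e-type Lemma \ref{cont} and the isoperimetric bound Lemma \ref{aa}(ii) in order to control the Lagrangian angle across the neck joining the two near-planar sheets in a uniform manner.
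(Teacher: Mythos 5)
Your Step 1 matches the paper exactly. But Steps 2--3 do not work, and the gap is structural rather than technical. The contradiction you aim for is
$$\limsup_{i\to\infty}\int_{L^i_1}\Phi(0,T)\,d\H^2\leq 2-\eta,$$
obtained by ``transferring'' the strict Gaussian density gap of the Joyce--Lee--Tsui expander $Q$ onto $L^i_1$. There is no mechanism for such a transfer: Huisken's monotonicity controls $\int_{L^i_1}\Phi(0,T)$ only in terms of the \emph{initial data} $L^i_0$ (giving an upper bound tending to $2$) minus the drift term $\int\int|H+(x)^{\bot}/(2(T-t))|^2\Phi$ of the flow $L^i_t$ itself, and in the very scenario you are trying to exclude --- $\bar L_t\equiv P_1+P_2$ for all $t$ --- that drift term tends to zero, so the densities genuinely converge to $2$ and no contradiction appears. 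The expander $Q$ is a different flow; higher-codimension mean curvature flow has no comparison principle, and the claim that smoothness forces $L^i_t$ ``into the same topological sector as $\sqrt{t}\,Q$'' is precisely the conclusion of Theorem \ref{sex}, of which this lemma is a step --- so the argument is circular. Your Step 2 estimate for $Q$ is true (it is essentially Lemma \ref{small}, applied there to $\bar L_1$ \emph{after} non-stationarity is known), but it is about the wrong object.

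The paper's actual contradiction uses exactness, not entropy, and needs no reference to $Q$. Granting Step 1, so that $\bar L_1=P_1+P_2$ with $H=x^{\bot}=0$ in the limit, one shows $\bar L_1$ would have to be a \emph{special Lagrangian} cone, contradicting the standing hypothesis that neither $P_1+P_2$ nor $P_1-P_2$ is area-minimizing. Concretely: smoothness of the flow keeps $L^i_1\cap B_{2K}$ with a single connected component meeting $B_K$ (via White's theorem on the annulus where $L^i_t$ stays graphical); varifold convergence to the stationary limit together with \eqref{exex} gives $\int_{L^i_1}(|H|^2+|x^{\bot}|^2)\Phi\to 0$, i.e.\ $\int|\nabla\beta^i_1|^2\to 0$; Lemmas \ref{cont} and \ref{aa} then force $\beta^i_1\to\bar\beta$ constant on that component; and \eqref{exex} gives $\beta^i_1+2\theta^i_1\to 0$ in $L^2$, hence $\theta^i_1\to-\bar\beta/2$, a single constant angle. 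So $P_1$ and $P_2$ would share a Lagrangian angle --- impossible. You listed exactly the right ingredients (Lemma \ref{test}(ii), Lemma \ref{cont}, Lemma \ref{aa}, connectedness from smoothness) in your final paragraph, but aimed them at the angle only as an auxiliary control for an entropy transfer that cannot be carried out; aimed directly at showing the limit angle is constant, they close the proof.
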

\begin{proof}
	If true, then $\bar L_1$ needs to have $H=x^{\bot}=0$ and thus  $\bar L_t=\bar L_1$ for all $t$, which means (making $t$ tend to zero)  that $\bar L_1=P_1+P_2$.
	We will argue that $\bar L_1$ must be a special Lagrangian, which contradicts the choice of $P_1$ and $P_2$.
	
	Pick $K$ large enough. Because $L^i_0\cap B_{2K}$ is connected and the flow exists smoothly, we claim $L^i_{1}\cap B_{2K}$ has only one connected component intersecting $B_K$. The details can be seen in \cite[Theorem 3.1, Lemma 4.5]{neves3} but the basic idea is to use the fact that  $L_0^i$ very close to $P_1+P_2$ in $A(K/2,3K)$ and so, like in Corollary \ref{ap1},  we conclude  that for all $x_0\in A(K/2,3K)$, all $i$ sufficiently large, all $0\leq t\leq 1$, and all $0\leq l\leq 1$, we  have
$$\Theta^i_{t}(x_0,l)\leq  1+\varepsilon_0.$$
White's Regularity Theorem implies we can control the  $C^{2,\alpha}$-norm of $L^i_t$ on $A(K,2K)$  and some long, but straightforward work,  implies the desired claim.

From varifold convergence we have
			$$\lim_{i\to\infty}\int_{0}^2\int_{L^i_t}|x^{\bot}|^2\Phi(0,4)d\H^2 dt=0$$
			which combined with \eqref{exex} implies that, without loss of generality,  
			\begin{equation*}\label{zero}
			\lim_{i\to\infty}\int_{L_1^i} (|H|^2+|x^{\bot}|^2)\Phi(0,4)d\H^2=0.
			\end{equation*}
Because  $L^i_{1}\cap B_{2K}$ has only one connected component intersecting $B_K$, we can use Lemma \ref{cont} and Lemma \ref{aa} to conclude the existence of  $\bar \beta$ so that, after passing to a subsequence, 
		$$\lim_{i\to\infty}\int_{L^i_1\cap B_K}(\beta^i_1-\bar\beta)^2\phi d\H^2=0.$$
Hence, from  \eqref{exex},  we obtain 
$$\lim_{i\to\infty} \int_{L^i_1\cap B_K}(\bar\beta+2\theta_1^i)^2d\H^2=\lim_{i\to\infty} \int_{L^i_1\cap B_K}(\beta^i_1+2\theta_1^i)^2d\H^2=0$$
which means $\bar L_1$ must be a special Lagrangian cone with Lagrangian angle $-\bar \beta/2$.
\end{proof}

\begin{lemm}\label{small}
There is  $C$ so that 
	$$\int_{\bar L_1}\Phi(y,l)(x,0)d\H^2\leq 2-1/C \quad\mbox{ for every } l\leq 2,\mbox{ and }y\in \R^4.$$
\end{lemm}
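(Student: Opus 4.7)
My plan is to argue by contradiction, exploiting Huisken's monotonicity formula for the Brakke flow $(\bar L_t)_{t>0}=(\sqrt t\,\bar L_1)_{t>0}$ in two complementary ways. Set $F(y,l):=\int_{\bar L_1}\Phi(y,l)(x,0)\,d\H^2$. Using $\Phi(y,l)(x,0)=\Phi(y,l+1)(x,1)$ and applying Theorem \ref{hmf} to the Brakke flow on $(0,1]$ with base point $(y,l+1)$, then letting $t\to 0^+$ and invoking $\bar L_t\to P_1+P_2$ as Radon measures, one obtains
$$F(y,l)\leq\int_{P_1+P_2}\Phi(y,l+1)(x,0)\,d\H^2=e^{-d_1(y)^2/(4(l+1))}+e^{-d_2(y)^2/(4(l+1))},$$
where $d_j(y)=\dist(y,P_j)$. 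Transversality of $P_1,P_2$ at the origin makes $d_1(y)^2+d_2(y)^2$ a positive definite quadratic form in $y$, so $d_1(y)^2+d_2(y)^2\geq c|y|^2$; together with the elementary inequality $e^{-a}+e^{-b}\leq 1+e^{-(a+b)}$ this yields $F(y,l)\leq 1+e^{-c|y|^2/12}$ for all $l\leq 2$. Hence $F$ is uniformly bounded below $2$ outside any fixed neighborhood of the origin.

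The second ingredient uses the self-expander structure at $y=0$. Since $n=2$ and $\bar L_t=\sqrt t\,\bar L_1$, the change of variables $x=\sqrt t\,u$ gives
$$\int_{\bar L_t}\Phi(0,T)(x,t)\,d\H^2=F(0,T/t-1),$$
and Theorem \ref{hmf} makes the left hand side non-increasing in $t$. Reparametrizing by $s=T/t-1$ shows that $s\mapsto F(0,s)$ is non-decreasing on $(0,\infty)$ with $\lim_{s\to\infty}F(0,s)=2$ (from the tangent cone $P_1+P_2$ at infinity). If $F(0,s_0)=2$ for some $s_0>0$, then $F(0,s)\equiv 2$ on $[s_0,\infty)$, so there is no deficit in Huisken's monotonicity on the corresponding $t$-interval; this forces $H=-x^{\bot}/(2(T-t))$ on $\bar L_t$ almost everywhere, and combined with the self-expander equation $H=x^{\bot}/(2t)$ inherited in the limit from \eqref{exex}, gives $x^{\bot}\equiv 0$. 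Hence $\bar L_t$ is a stationary cone, contradicting Lemma \ref{stationary}. Consequently $F(0,2)<2$, and by the monotonicity just established $F(0,l)\leq F(0,2)<2$ for all $l\leq 2$.

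To close the contradiction, suppose there were a sequence $(y_i,l_i)$ with $l_i\leq 2$ and $F(y_i,l_i)\to 2$. The estimate from the first paragraph forces $y_i\to 0$. If $l_i$ stays bounded below by some $\varepsilon>0$, pass to a subsequence $l_i\to l_\infty\in[\varepsilon,2]$; continuity of $F$ on $\R^4\times(0,2]$ yields $F(0,l_\infty)=2$, contradicting the second paragraph. If $l_i\to 0$, upper semi-continuity of Gaussian densities for integral Brakke motions gives $\Theta(\bar L_1,0)\geq\limsup F(y_i,l_i)=2$; since $F(0,s)\to\Theta(\bar L_1,0)$ as $s\to 0^+$, this again contradicts $F(0,s)<2$. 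The hard part will be to make the equality case of Huisken's formula rigorous in the possibly singular limit $\bar L_t$, as one must operate within the integral Brakke motion framework rather than with a smooth flow, along the lines of Corollary \ref{cor-test}.
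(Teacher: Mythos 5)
Your argument is correct in outline and shares its first half with the paper's proof: both start from Huisken's monotonicity formula applied on $(0,1]$ with base point $(y,l+1)$, using $\bar L_0=P_1+P_2$ and the fact that a pair of planes has Gaussian density at most $2$, and both ultimately trace the strict gap back to Lemma \ref{stationary}. Where you diverge is in how the gap is made \emph{uniform} in $(y,l)$. The paper keeps the accumulated deficit term $\int_0^1\int_{\bar L_t}\bigl|H+\tfrac{(x-y)^{\bot}}{2(l+1-t)}\bigr|^2\Phi\,d\H^2dt$ on the left-hand side and bounds it below by a single constant $1/C$ for all $y$ and $l\leq 2$, using non-stationarity of $\bar L_1$ together with (implicitly) the decay of $\int_{P_1+P_2}\Phi(y,l+1)$ for $|y|$ large; the details are deferred to \cite[Lemma 4.6]{neves3}. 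You instead discard the deficit term and split: for $y$ away from the origin you compute $\int_{P_1+P_2}\Phi(y,l+1)=e^{-d_1(y)^2/(4(l+1))}+e^{-d_2(y)^2/(4(l+1))}\leq 1+e^{-c|y|^2/12}$ explicitly (this is correct, and it is essentially the same far-field input the paper uses); for $y$ near the origin you prove the strict inequality $F(0,s)<2$ via the rigidity of the equality case of Huisken along the self-expanding flow, and then upgrade to uniformity by a compactness/upper-semicontinuity argument. Both routes are sound; yours is softer (it produces a non-explicit constant through contradiction, whereas the paper's deficit estimate is in principle quantitative), and it does require the two technical facts you flag — the localized Huisken inequality with deficit term for integral Brakke motions, and almost-monotonicity of $l\mapsto F(y,l)$ for the static varifold $\bar L_1$ (which has locally bounded generalized mean curvature $H=x^{\bot}/2$) to handle the regime $l_i\to 0$. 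Neither is an obstacle, so I regard the proposal as a correct, mildly different implementation of the same strategy.
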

\begin{proof}
The details can be found in \cite[Lemma 4.6]{neves3}. Because $\bar L_0=P_1+P_2$  we obtain from Theorem \ref{hmf} 
\begin{multline*}
	\int_{\bar L_1}\Phi(y,l)(x,0)d\H^2+\int_{0}^1\int_{\bar L_t}\left|H+\frac{(x-y)^{\bot}}{2(l+1-t)}\right|^2\Phi(y,l+1-t)(x,0)d\H^2 dt\\
	= \int_{P_1+P_2}\Phi(y,l+1)(x,0)d\H^2\leq 2.
\end{multline*}
The fact that $\bar L_1$ is not stationary allows us to estimate the second term on the first line and  find a constant $C$ such that
$$ \int_{\bar L_1}\Phi(y,l)(x,0)d\H^2\leq 2-1/C$$
for all $y$ and $l\leq 2$.
\end{proof}

The same ideas  used to show Theorem \ref{type1} can be modified to argue the tangent cone at any point $y \in \bar L_1$ must be a union of Lagrangian planes with possible multiplicities. The previous lemma implies it must be a plane with multiplicity one because otherwise
$$\lim_{r\to 0}\int_{\bar L_1}\Phi(y,r^2)(x,0)d\H^2 \geq 2.$$
The mean curvature of $\bar L_1$ satisfies $H=x^{\bot}/2$ and so Allard Regularity Theorem implies uniform $C^{2,\alpha}$ bounds for $\bar L_1$. Therefore, $\bar L_1$ is a smooth  self-expander asymptotic to $P_1+P_2$. Some extra work  shows  $L^i_t$ converges strongly to $\sqrt t\bar L_1$ and this finishes the proof.
\end{proof}

\section{Application III: Stability of Singularities}\label{sta.sta}

We prove a result which is related to \cite[Theorem A]{neves3} but, before we state it,  we need to introduce some notation. 

Given any curve $\gamma:I\longrightarrow \C$, we  obtain a Lagrangian surface in $\C^2$ given by
\begin{equation*}\label{equi}
N=\{(\gamma(s)\cos \alpha,\gamma(s)\sin \alpha)\,|\, s\in I, \alpha \in S^1\}.
\end{equation*}

Any Lagrangian which has the same $SO(2)$ symmetry as $N$   is called {\em equivariant}. If $\mu=x_1y_2-y_1x_2$, it is simple to see that $L$ is equivariant if and only if $L\subset \mu^{-1}(0)$ (see \cite[Lemma 7.1]{neves3}).

Let $c_1,$ $c_2$, and $c_3$ be three lines in $\C$ so that $c_1$  is  the real axis ($c_1^+$ being the positive part and $c_1^-$ the negative part of the real axis), $c_2,$ and  $c_3$ are the positive line segments spanned by $e^{i\theta_2}$ and $e^{i\theta_3}$ respectively, with $\pi/2<\theta_2<\theta_3<\pi.$ These curves generate three Lagrangian planes in $\R^4$ which we denote by $P_1, P_2,$ and $P_3$ respectively.

Consider a curve $\gamma(\varepsilon):[0,+\infty)\longrightarrow \C$  such that (see Figure \ref{modelo})
 \begin{itemize}
\item $\gamma(\varepsilon)$ lies in the first and second quadrant and $\gamma(\varepsilon)^{-1}(0)=0$;
\item  $\gamma(\varepsilon)\cap A(3,\infty)=c_1^+\cap  A(3,\infty)$ and $\gamma(\varepsilon)\cap A(\varepsilon,1)=(c_1^+\cup c_2\cup c_3)\cap  A(\varepsilon,1)$; 
\item $\gamma(\varepsilon)\cap B_{1}$ has two connected components $\gamma_1$ and $\gamma_2$, where $\gamma_1$ connects $c_2$ to $c_1^+$ and $\gamma_2$ coincides with $c_3$; 
\item The Lagrangian angle of $\gamma_1$, $\arg \left(\gamma_1 \frac{d\gamma_1}{ds}\right)$,  has oscillation strictly smaller than $\pi/2$.
\end{itemize}

\begin{figure}
\centering {\epsfig{file=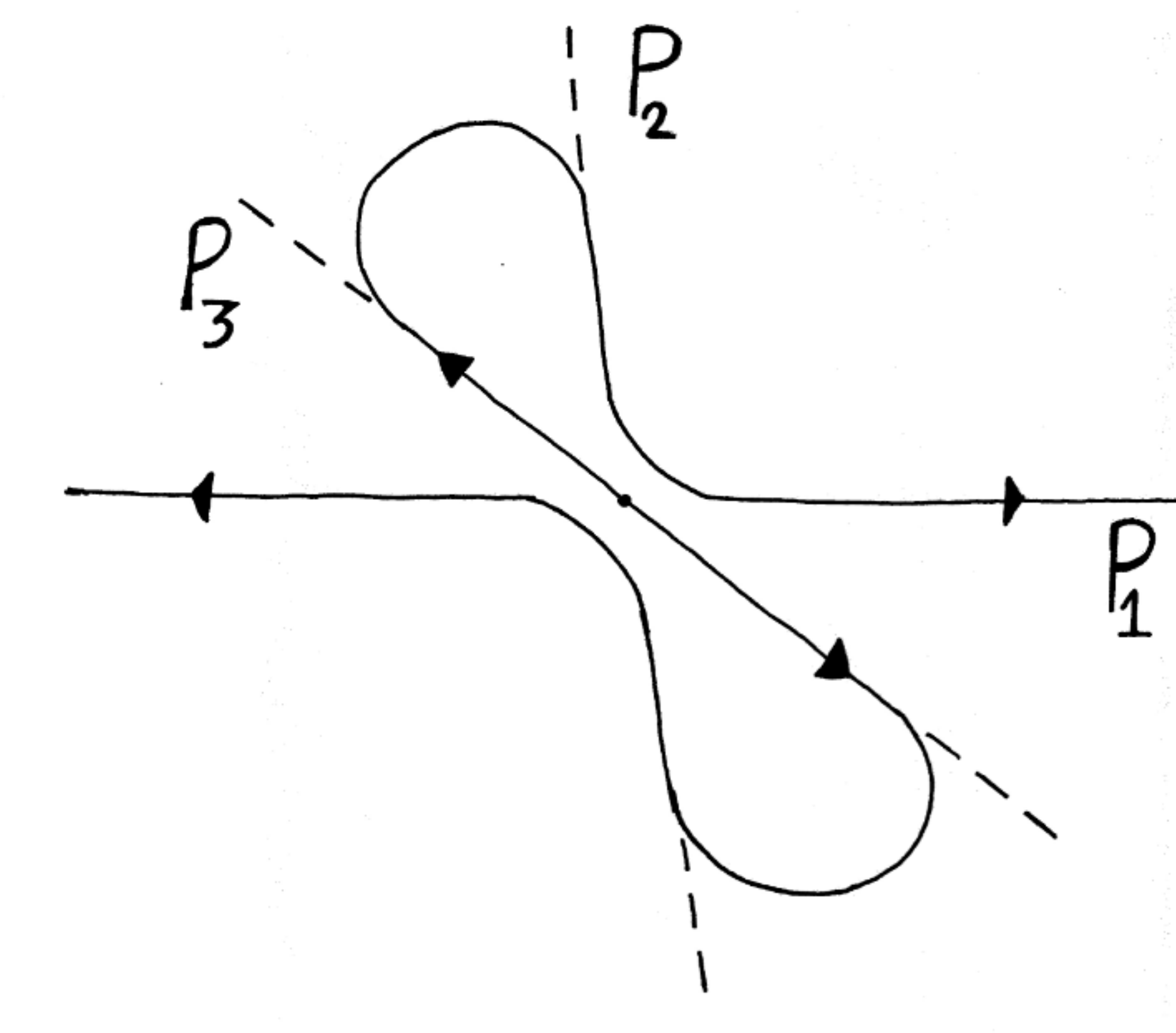, height=160pt}}\caption{Curve $\gamma(\varepsilon)\cup -\gamma(\varepsilon)$.}\label{modelo}
\end{figure}
For every $\varepsilon$ small and $R$ large we denote by $N(\varepsilon,  R)$ the Lagrangian surface corresponding to $R\gamma(\varepsilon R)$.  We remark that one  can make the oscillation for the Lagrangian angle of $\gamma_1$ as small  as desired by choosing $\theta_2$ very close to $\pi/2$.

\begin{thm}\label{singular}For all $\varepsilon$ sufficiently small and $R$ sufficiently large, there is $\delta$ so that if $L_0$ is $\delta$-close in $C^{2,\alpha}$ to $N(\varepsilon, R)$, then the Lagrangian mean curvature flow $(L_t)_{t\geq 0}$ must have a finite time singularity.
\end{thm}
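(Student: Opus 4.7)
The plan is to argue by contradiction. Suppose $(L_t)_{t\geq 0}$ is a smooth Lagrangian mean curvature flow for all $t\geq 0$ starting at a $C^{2,\alpha}$-perturbation $L_0$ of $N(\varepsilon,R)$. The strategy is to combine the approximate equivariance of $L_0$, a self-expander attractor argument in the spirit of Theorem~\ref{sex}, and a rigidity obstruction on the resulting limit.

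First I would propagate two quantitative features of $L_0$ along the flow. Since $N(\varepsilon,R)\subset\mu^{-1}(0)$ for $\mu=x_1y_2-x_2y_1$, the integral $\int_{L_0}\mu^2\Phi(0,T)d\mathcal{H}^2$ is small for suitable $T$; by Lemma~\ref{test}(iv) and Theorem~\ref{hmf} it is non-increasing under the flow, so $L_t$ stays integrally concentrated near $\mu^{-1}(0)$. Simultaneously, on each smooth connected component of $L_t$ the Lagrangian angle $\theta$ satisfies the heat equation (since $H=J\nabla\theta$), so by the maximum principle the oscillation of $\theta$ on each such component is non-increasing in $t$; in particular the component of $L_t$ coming from the bridge $\gamma_1$ keeps its Lagrangian angle oscillation strictly less than $\pi/2$ for as long as it stays smoothly identifiable. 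Finally, since $L_0$ is close to the real plane $P_1$ outside a ball of radius of order $R$, White's Regularity Theorem~\ref{reg} controls the flow on annular regions and confines the interesting dynamics to a bounded neighborhood of the origin.

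Next I would parabolically rescale $L^i_s=\lambda_i L_{s\lambda_i^{-2}}$ with $\lambda_i\to\infty$ chosen so that $\lambda_i\varepsilon\to 0$ and $\lambda_i R\to\infty$, so that $L^i_0$ converges as Radon measures to $P_1+P_2+P_3$. Arguing as in the proof of Theorem~\ref{sex} via the monotonicity identity \eqref{exex} applied to $\beta_t+2(t-T)\theta_t$, any sub-sequential Brakke limit $\bar L_s$ is a self-similar flow $\bar L_s=\sqrt{s}\bar L_1$ asymptotic to $P_1+P_2+P_3$ as $s\to 0$; the $\mu^2$-bound above forces $\bar L_s\subset\mu^{-1}(0)$, so $\bar L_1$ is equivariant and is generated by a planar self-expander curve $\bar\gamma$. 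Using White regularity on annular pieces and connectivity arguments in the spirit of Corollary~\ref{ap1}, $\bar\gamma$ inherits the two-component structure of $\gamma(\varepsilon)$ near the origin: a bridge component $\bar\gamma_1$ asymptotic to $c_1^+$ and $c_2$, and a second component $\bar\gamma_2$ passing through the origin and asymptotic to $c_3$. The self-expander condition $H=x^\bot/2$ combined with the fact that $\bar\gamma_2$ meets the origin forces $\bar\gamma_2$ to be scale-invariant from the origin, hence $\bar\gamma_2=c_3$ (a stationary ray).

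The contradiction then comes from a rigidity obstruction on $\bar\gamma_1$. On one hand, the oscillation of $\arg(\bar\gamma_1\bar\gamma_1')$ is the rescaled limit of the oscillation on the bridge component of $L^i_{s_0}$, which is strictly less than $\pi/2$ by the first step and the scale-invariance of $\arg(\gamma\gamma')$ under dilation. On the other hand, $\bar\gamma_1$ is an exact, zero-Maslov smooth self-expander asymptotic to the rays $c_1^+$ and $c_2$, and for such objects $\beta+2s\theta$ is constant (Lemma~\ref{test}(ii)), so the asymptotic values of $\beta$ and $\theta$ at the two ends $c_1^+,c_2$ are rigidly linked. A careful combination of this $\beta$-matching with the small-oscillation constraint inherited from $\gamma_1$ rules out the existence of such a $\bar\gamma_1$ for the given angles $\theta_2,\theta_3$, producing the desired contradiction; the assumption of smooth long-time existence must then fail, and $(L_t)$ develops a finite-time singularity. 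The main obstacle is exactly this last rigidity step: one must simultaneously propagate the two-component topology to the limit via White regularity and barriers, force $\bar\gamma_2=c_3$ using the equivariant self-expander ODE (in the vein of \cite{anciaux}), and reconcile the asymptotic $\beta$-invariant with the maximum-principle oscillation bound, all of which require the careful ODE and barrier analysis developed in \cite[Section~4]{neves3}.
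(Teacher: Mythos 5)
Your setup --- arguing by contradiction, passing to a Brakke-flow limit of a sequence of smooth flows, confining the limit to $\mu^{-1}(0)$ via Lemma \ref{test} iv) and Corollary \ref{cor-test} iii), and invoking a self-expander attractor in the spirit of Theorem \ref{sex} for the bridge piece --- matches the paper's. But the mechanism you propose for the final contradiction is wrong. You want to show that the limiting equivariant configuration, namely a self-expander bridge $\bar\gamma_1$ asymptotic to $c_1^+$ and $c_2$ with small Lagrangian angle oscillation together with the ray $c_3$, cannot exist, by playing the constancy of $\beta+2s\theta$ against the oscillation bound. No such rigidity obstruction exists: Theorem \ref{jlt} (and Anciaux's equivariant ODE analysis) produces exactly such a self-expander whenever $P_1\pm P_2$ is not area-minimizing, and the end-matching relation $\beta_1+2\theta(P_1)=\beta_2+2\theta(P_2)$ is satisfied with distinct angles because the asymptotic values of the primitive $\beta$ at the two ends need not coincide. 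The whole point of Theorem \ref{sex} is that the flow converges \emph{to} this self-expander, not that the self-expander cannot exist.

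The actual singularity mechanism, which your proposal is missing entirely, operates \emph{after} the bridge has opened up: at time $1$ the limit flow $M_1$ is described by a single equivariant profile curve $\sigma$ with a self-intersection (the expanded bridge now crosses the ray $c_3$), enclosing an area $A_1$. For the equivariant reduction $\frac{dx}{dt}=\vec k-\frac{x^{\bot}}{|x|^2}$, Gauss--Bonnet combined with the divergence theorem gives $\frac{d}{dt}A_t\leq-\pi$, so the loop must collapse before $T_1=2A_1/\pi+1$ and $M_t$ becomes singular. One must then still transfer this to the approximating smooth flows $L^i_t$: the paper does so by observing that the loop collapse changes the topology of $\sigma_t$, so by the Hopf index theorem the quantity $\theta_t(\infty)-\theta_t(0)$ jumps by $2\pi$ across the singular time, whereas strong convergence of $L^i_t$ to $M_t$ near the origin and outside a large compact set would force this quantity to be continuous if every $L^i_t$ stayed smooth. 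Separately, your intermediate rescaling $L^i_s=\lambda_i L_{s\lambda_i^{-2}}$ with a globally self-similar limit asymptotic to $P_1+P_2+P_3$ is not available: $N(\varepsilon,R)$ is not globally close to a cone (the arc joining $c_3$ to $c_2$ carries a nontrivial primitive $\beta_0$), so the exactness argument \eqref{exex} does not apply to the whole configuration; the paper only applies the self-expander analysis locally, to the bridge component $Q^i_{1,t}$.
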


\begin{rmk}\
\begin{enumerate}
\item[i)] The ideas that go into the proof of Theorem \ref{singular} are exactly the same ideas that go into the proof of \cite[Theorem A]{neves}, with the advantage of the former having less technical details.
\item [ii)] If $L_0$ is equivariant the flow reduces to an O.D.E. in which case  Theorem \ref{singular} follows from simple barrier arguments like the ones used in \cite[Section 4]{neves}.
\item[iii)] It is conceivable that a solution to mean curvature flow has a singularity at time $T$ but there are arbitrarily small $C^{2,\alpha}$ perturbations of the initial condition which are smooth up to $T+\delta$, with $\delta$ fixed. The standard example is the dumbbell degenerate neckpinch due to Angenent and Vel\'azquez.  Thus the interest of Theorem \ref{singular}.
\end{enumerate}
\end{rmk}
\begin{proof}[Sketch of proof]
Fix $\varepsilon$ small and $R$ large to be chosen later. The strategy is the following: If the flow $(L_t)_{t\geq 0}$ exists smoothly for all $t\leq 1$, there will be a singularity before some time $T=T(\varepsilon, R)$ and thus the flow cannot exist smoothly for all time.

Suppose  the theorem does not hold. We have a sequence of smooth flows $(L^i_t)_{t\geq 0}$ where $L^i_0$ converges to $N(\varepsilon, R)$. Compactness for integral Brakke motions \cite[Section 7.1]{ilmanen1} implies that, after passing to a subsequence, $(L^i_t)_{t\geq 0}$ converges to an integral Brakke motion $(M_t)_{t\geq 0}.$ Because
$$\lim_{i\to\infty}\int_{L^i_0}\mu^2\Phi(0,1)d\H^2=0,$$
we use Lemma \ref{cor-test} iii)  and conclude that $M_t$ lies inside $\mu^{-1}(0)$ for all $t$.

Let $\sigma$ denote a
smooth curve $\sigma:[0,+\infty)\longrightarrow \C$ (see Figure \ref{modelo-3}) so that $\sigma^{-1}(0)=0$, $\sigma\cup-\sigma$ is smooth at the origin, $\sigma$ has a unique self intersection, and, when restricted to $[s_0,\infty)$ for some $s_0>0$, the curve $\sigma$ can be written as the graph of a function $u$ defined over part of the negative real axis with $$\lim_{r\to-\infty}|u|_{C^{2,\alpha}((-\infty,r])}=0.$$
\begin{figure}
\centering {\epsfig{file=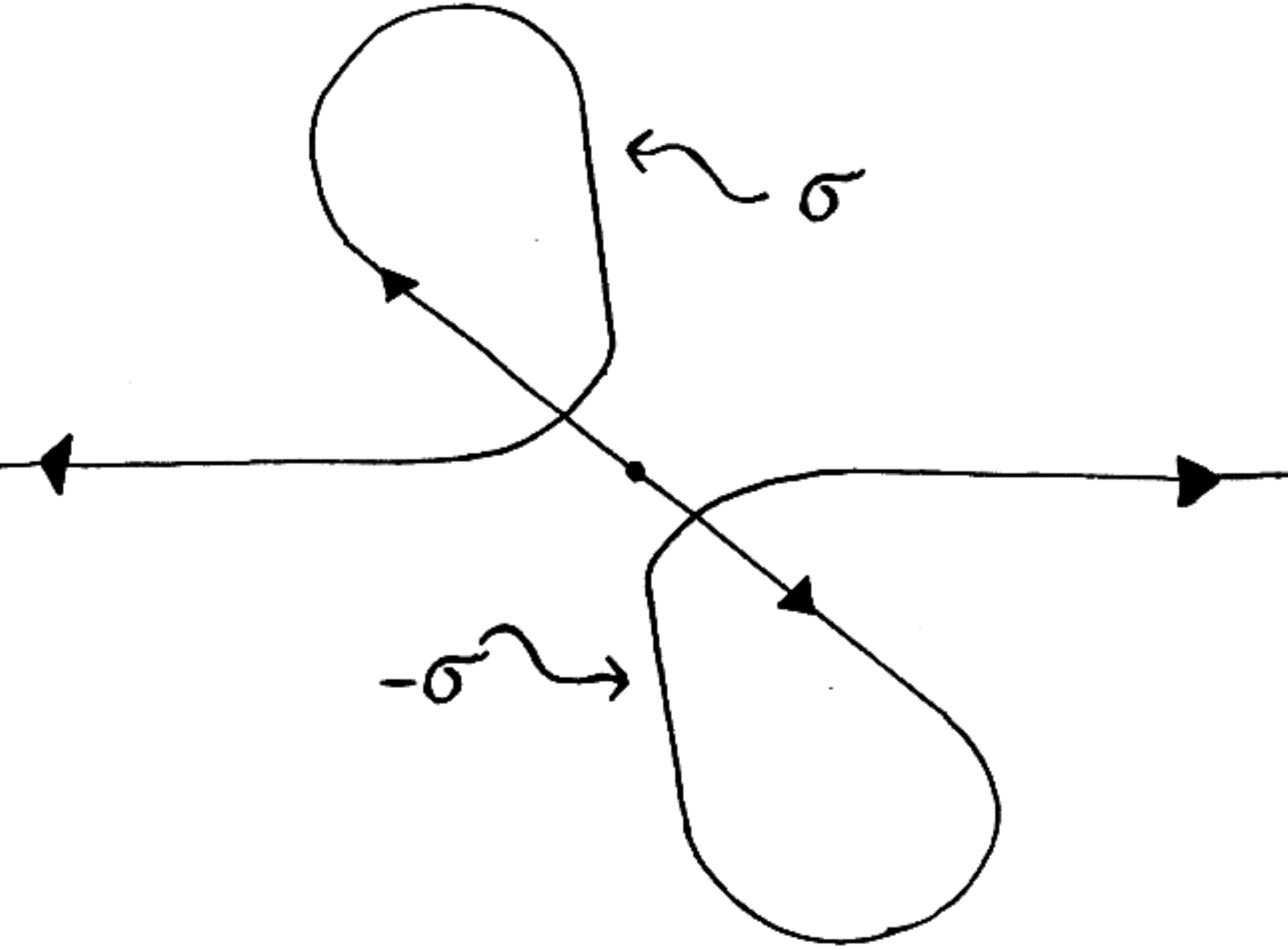, height=160pt}}\caption{Curve $\sigma\cup -\sigma$.}\label{modelo-3}
\end{figure}

\vspace{0.1in}
\noindent{\bf First Claim:}
$M_1=\{(\sigma(s)\cos \alpha, \sigma(s) \sin \alpha)\,|\, s\in [0,+\infty), \alpha\in S^1\}.$

 It is a  simple technical matter to find $R_1$ (independent of $R$ large and $\varepsilon$ small) so that, for all $i$ sufficiently large,  we have uniform $C_{loc}^{2,\alpha}$ bounds for $L^i_1$ outside $B_{R_1}$.  Moreover,  for all $R_2\geq R_1$ and $i$ sufficiently large, $L^i_0\cap B_{R_2}$ has exactly two connected components $Q^i_{1,0}$, $Q^i_{2,0}$ where $Q^i_{1,0}$, $Q^i_{2,0}$ is arbitrarily close to $(P_1+P_2)\cap B_{R_2}$, $P_3\cap B_{R_@}$ respectively. Some extra work  (see \cite[Theorem 3.1]{neves3} for details) shows that  $L^i_t\cap B_{R_2}$ can be decomposed into two connected components $Q_{1,t}^i, Q_{2,t}^i$  for all $t\leq 1$.

Fix $\nu$ small and set $S_0=2R_1$ in Theorem \ref{sex}. There is $R_2$ so that for all $R$ sufficiently large and $\varepsilon$ sufficiently small we can apply Theorem \ref{sex} to $(Q^i_{1,t})_{0\leq t\leq 1}$ (more rigorously, we should actually apply \cite[Theorem 4.1]{neves3} because $Q^i_{1,t}$ has boundary) and conclude that, for all $i$ large enough, $Q^i_{1,1}$ is $\nu$ close in $C^{2,\alpha}$ to a self-expander $Q$ in $B_{S_0}$. In  \cite[Lemma 6.1]{neves3} we showed $Q$ to be unique and so we obtain uniform $C^{2,\alpha}$ bounds  for $Q^{i}_{1,1}$ in $B_{S_0}$. Furthermore $Q_{2,0}^i$ is arbitrarily close to $P_3$ and  some technical work  (see \cite[Theorem 3.1]{neves3}) shows that $Q_{2,1}^i\cap B_{S_0}$ is also close in $C^{2,\alpha}$ to $P_3$. 

In sum, we have shown uniform $C^{2,\alpha}$ bounds for $L^i_1\cap B_{R_1}$ for all $i$ sufficiently large. As a result $M_1$ must be smooth, equivariant, and $M_1\cap B_{R_1}$ must have one connected component $\nu$-close to $Q$ and another connected component close in $C^{2,\alpha}$ to $P_3$. It is now simple to see that $M_1$ must be described by a curve $\sigma$ as claimed. 

\vspace{0.1in}
Denote by $A_1$ the area enclosed by the self-intersection of $\sigma$ and set  $T_1=2A_1/ \pi+1$. 

\vspace{0.1in}
\noindent{\bf Second  Claim:} For all $i$ sufficiently large, $L^i_t$ must have a singularity before $T_1$.

Straightforward considerations (see \cite[Theorem 5.3]{neves3}) show that while $(M_t)_{t\geq 1}$ is smooth we have the existence of  curves $\sigma_t:[0,+\infty)\longrightarrow \C$ with $\sigma_t^{-1}(0)=0,$ $\sigma_t\cup-\sigma_t$  smooth, 
$$M_t=\{(\sigma_t(s)\cos \alpha, \sigma_t(s) \sin \alpha)\,|\, s\in [0,+\infty), \alpha\in S^1\},$$
and  such that $\sigma_t$ evolves according to
$$\frac{dx}{dt}=\vec k-\frac{x^{\bot}}{|x|^2}.$$

While this flow is smooth all the curves $\sigma_t$ have a self-intersection and so we  consider $A_t$ the area enclosed by this self-intersection and  $c_t$ the boundary of the enclosed region. From Gauss-Bonnet Theorem we have
$$\int_{c_t}\langle\vec k,\nu\rangle d\H^1+\alpha_t=2\pi\implies \int_{c_t}\langle\vec k,\nu\rangle d\H^1\geq \pi,$$
where $\alpha_t \in [-\pi,\pi]$ is  the exterior angle at the non-smooth point of $c_t$, and $\nu$ the interior unit normal.
A standard formula shows that 
$$\frac{d}{dt}A_t=-\int_{c_t}\left \langle \vec k-\frac{{x^{\bot}}}{|x|^{2}},\nu\right\rangle d\H^1\leq -\pi+\int_{c_t}\left \langle\frac{{x}}{{|x|^{2}}},\nu\right\rangle d\H^1=-\pi,$$
where the last identity follows from the Divergence Theorem combined with the  fact that $c_t$ does not contain the origin in its interior.  Thus $A_t\leq A_0-\pi t$ and so a singularity must occur at time $T<T_1$.

Angenent's work \cite{angenent, angenent0} implies the singularity occurs because the loop of $\sigma_t$ collapses, i.e. $\sigma_T$ is smooth everywhere expect at a cusp point and some extra work shows the curves $\sigma_t$ become smooth and embedded for all $t>T$.  The idea for the rest of the argument is as follows and the details can be found in \cite[Theorem 5.1]{neves3}.  If $\hat \theta_t(s)$ denotes the angle that $\sigma'_t(s)$ makes with the $x$-axis we have
$$\theta_t(0)=2\hat\theta_t(0)\quad\mbox{and}\quad \theta_t(\infty)=\lim_{s\to\infty}\theta_t(s)=2\hat \theta_t(\infty).$$
Set $f(t)=\theta_t(\infty)-\theta_t(0)=2(\hat \theta_t(\infty)-\hat \theta_t(0))$. Because there is a change in the topology of $\sigma_t$ across $T$ we have from the Hopf index Theorem that $f(t)$ jumps by $2\pi$ across $T$. On the other hand, the convergence of $L^i_t$ to $M_t$ is strong around the origin and outside a large compact set, which means that $f$ must be continuous, a contradiction.
\end{proof}

\section{Open Questions}
 We now survey some open problems which could be relevant for the development of the field. One of the most important open questions is
 
 \begin{quest}\label{tapio}
 Let $L_0$ be rational, almost-calibrated, and $(L_t)_{0\leq t<T}$  a solution to Lagrangian mean curvature flow in $\C^2$ which becomes singular at time $T$. Show that $L_T$ is smooth except at finitely many points and the tangent cone at each of these points is a special Lagrangian cone with some multiplicity.
 \end{quest}

 $L_0$ is required to be rational so that Theorem \ref{thmb} can be applied and almost-calibrated because otherwise there are simple counterexamples (see \cite[Example 1.1]{neves2}). For instance, take a  noncompact curve $\sigma$ in $\C$ with a unique self intersection and  consider $L=\sigma\times\R\subset \C^2$ which is obviously Lagrangian. The solution to Lagrangian mean curvature flow  will be $L_t=\sigma_t\times\R$ which will have a whole line of singularities at some time $T$. 
 
 If one can show that for each singular point $x_0$ there is $\theta(x_0)$ so that
 \begin{equation}\label{hard}
 \lim_{t\to T}\int_{L_t}(\theta_t-\theta(x_0))\Phi(x_0,T)d\H^2=0,
 \end{equation}
 then standard arguments  prove the desired result.  We now comment on the difficulty of \eqref{hard}. For simplicity let us assume that $$\Theta_t(x,r^2)<3 \mbox{ for all $r\leq \delta$, $T-\delta^2<t<T$, and $x\in \C^2$.}$$
In this case the blow up at each singular point described in Section \ref{blowup} will be  a union of two planes $P_1$+$P_2$ by Theorem \ref{type1} and three cases can happen:  $\dim (P_1\cap P_2)=2$, $\dim (P_1\cap P_2)=0$, or  $\dim (P_1\cap P_2)=1$. In the first case the Lagrangian angles of $P_1$ and $P_2$ must be identical by the almost-calibrated condition and so we should have $\theta(x_0)=\theta(P_1)=\theta(P_2)$.  In the second case we have  from Corollary \ref{ap1} that $P_1$ and $P_2$ must have the same Lagrangian angle and so  $\theta(x_0)=\theta(P_1)=\theta(P_2)$. The third case we want to show is impossible and this is a highly non trivial matter for reasons we know explain.

In \cite{joyce}, Joyce, Lee, and Tsui found a Lagrangian $N$ with small oscillation of the Lagrangian angle and such that
$$N_t=N+t\vec v$$
solves mean curvature flow. If we blow down this flow, i.e., consider the sequence $N^i_s=\varepsilon_i N_{s/\varepsilon_i^2}$ with $\varepsilon_i$ tending to zero,  one can easily check that, for all $s<0$, $N^i_s$ converges weakly to $P_1+P_2$, where $P_1,P_2$ are Lagrangian planes with $P_1\cap P_2=\mbox{span}\{\vec v\}$.  Thus, if we rule out the third case we would be ruling out Joyce, Lee, and Tsui solutions as smooth blow ups, a clearly deep fact.

In \cite{joyce} the authors also found many examples of self-expanders and so a natural problem is

\begin{quest}\
\begin{itemize}
 \item Let $L\subset \C^n$ be a zero-Maslov class self-expander asymptotic to two planes. Show it coincides with one of the self-expanders of Joyce, Lee, and Tsui.
 \item Are there almost-calibrated translating solutions to Lagrangian mean curvature flow  besides the ones found by Joyce, Lee, and Tsui?
 \end{itemize}
\end{quest}
Castro and Lerma \cite{castro} solved the first question when $n=2$ assuming $L$ is Hamiltonian stationary as well, i..e, the Lagrangian angle is harmonic. When $n>2$ it is not known whether special Lagrangians asymptotic to two planes have to be one of the Lawlor necks, which adds interest to the proposed problem. In \cite{neves3} we  showed the blow-down $N^i_s$ of translating solutions  converges weakly to $\Sigma_s$  with
$$\Sigma_s=m_1P_1+\ldots+m_kP_k\mbox{ for all }s\leq 0\quad\mbox{and }\Sigma_s=\sqrt s \Sigma_1\mbox{ for all }s> 0,$$
where the planes $P_j$ intersect all along a line and $\Sigma_1$ is a self-expander asymptotic to $m_1P_1+\ldots+m_kP_k$. It is easy to see that $\Sigma_1=\sigma\times \mbox{line}$, where $\sigma$ is a self-expander in $\C$, and so the first step towards the second problem would be to see if one can have $k=3$ and $m_1=m_2=m_3=1$. In \cite{castro2} examples were found with unbounded Lagrangian angle.

Success in Question \ref{tapio} would make the following problem having crucial importance.

\begin{quest}
Let $L\subset \C^n$ be zero-Maslov class, almost-calibrated, and smooth everywhere except the origin where the tangent cone is a special Lagrangian cone with multiplicity. Find $(L_t)_{\varepsilon<t<\varepsilon}$ a meaningful solution to Lagrangian mean curvature flow so that $L_t$ converges to $L$ when $t$ tends to zero.
 \end{quest}

Behrndt \cite{ber} made concrete progress on this problem  when the multiplicity is one and the special Lagrangian cone is stable. If would be nice to have a solution when the tangent cone is a plane with multiplicity two.

Finally, there is no result available regarding convergence of compact Lagrangians in $\C^n$. For instance, the following question can be seen as a Lagrangian  analogue of Huisken's classical result for mean curvature flow of convex spheres \cite{huisken1}.

\begin{quest}
Find a condition on a Lagrangian torus in $\C^2$, which implies that Lagrangian mean curvature flow $(L_t)_{0<t<T}$ will become extinct at time $T$ and, after rescale, $L_t$ converges to the Clifford torus.
\end{quest}

It was shown in \cite{neves4,smo} that  $L$ can be   Hamiltonian isotopic to a Clifford Torus and the flow still develop singularities before the optimal time. As suggested by Joyce, the first natural thing would be to see what happens to small Hamiltonian perturbations of the Clifford torus.

\bibliographystyle{amsbook}

\vspace{20mm}

\end{document}